\renewcommand{\section}{%
\@startsection{section}{1}%
  \z@{.7\linespacing\@plus\linespacing}{.5\linespacing}%
 {\normalfont\large\bfseries\centering}}
\newtheorem{theorem}{Theorem}[section]
\newtheorem{lemma}[theorem]{Lemma}
\newtheorem*{theorem*}{Theorem} 
\newtheorem*{corollary*}{Corollary}
\newtheorem*{conjecture*}{Conjecture}
\newtheorem*{lemma*}{Lemma}
\newtheorem*{proposition*}{Proposition}
\newtheorem*{problem*}{Problem}
\newtheorem*{axiom*}{Axiom}
\newtheorem*{example*}{Example}
\newtheorem*{exercise*}{Exercise}
\newtheorem*{definition*}{Definition}
\theoremstyle{definition}
\newtheorem{remark}{Remark}[section]
\newtheorem*{remark*}{Remark}
\numberwithin{equation}{section} 
\renewcommand{\l}{\left}
\renewcommand{\r}{\right}
\newcommand{\eps}{\varepsilon}
\newcommand{\N}{{\mathbb N}}
\newcommand{\R}{{\mathbb R}}
\newcommand{\C}{{\mathbb C}}
\newcommand{\im}{{\rm Im}}
\newcommand{\re}{{\rm Re}}
\newcommand{\ds}{\displaystyle}
\newcommand{\del}{\partial}
\newcommand{\wto}{\rightharpoonup}
\def\norm[#1]{\left\Vert #1 \right\Vert}
\def\tbra[#1,#2]{\left\langle #1 , #2\right\rangle} 
\def\rbra[#1,#2]{\left( #1 , #2 \right)} 
\def\sbra[#1,#2]{\left[ #1 , #2 \right]} 
\def\besov[#1,#2,#3]{B_{#2,#3}^{#1}}
\def\hbesov[#1,#2,#3]{\dot{B}_{#2,#3}^{#1}}
\newcommand{\scD}{{\mathscr D}}
\newcommand{\scL}{{\mathscr L}}
\begin{document}

\title[NLS in a general domain]{A note on the nonlinear Schr\"{o}dinger equation in a general domain}


\author{Masayuki Hayashi}
\address{Department of Applied Physics, Waseda University, Tokyo 169-8555, Japan}
\curraddr{}
\email{masayuki-884@fuji.waseda.jp}
\thanks{}


\subjclass[2010]{Primary 35Q55, Secondary 35B30, 35R01}
\keywords{Nonlinear Schr\"{o}dinger equation, general domain}

\date{}

\dedicatory{}


\begin{abstract}
We consider the Cauchy problem for nonlinear Schr\"{o}dinger equations in a general domain $\Omega\subset\R^N$. Construction of solutions has been only done by classical compactness method in previous results. Here, we construct solutions by a simple alternative approach. More precisely, solutions are constructed by proving that approximate solutions form a Cauchy sequence in some Banach space. We discuss three different types of nonlinearities: power type nonlinearities, logarithmic nonlinearities and damping nonlinearities.
\end{abstract}

\maketitle

\section{Introduction}
We consider the following nonlinear Schr\"{o}dinger equation:
\begin{align}
 \label{NLS}
 \l\{
\begin{array}{lll}
 i \del_{t}u + \Delta u +g(u)= 0  &\text{in} 
&\R \times \Omega , \\[3pt] 
u=0  &\text{on} &\R \times \del \Omega ,\\[3pt]
u(0) =\varphi  &\text{on} &\Omega ,
\end{array}
\r.
\end{align}
where $\Omega \subset \R^N$ is an open set and $g$ is a given nonlinearity. It is often considered the pure power nonlinearity $g(u)=\lambda |u|^{\alpha}u$ with $\lambda \in\R$ and $\alpha\geq 0$ as a typical example.

Strichartz estimates are a key tool for the study of the Cauchy problem (\ref{NLS}). In the case $\Omega =\R^N$, the Cauchy problem for nonlinear Schr\"{o}dinger equations as an application of Strichartz estimates has been extensively studied (see \cite{C03} and references therein).  There is also a large literature for the study of a suitable version of Strichartz estimates with loss on compact manifolds or exterior domains. We refer to \cite{Bo93, ST02, BGT04a, BGT04b, A08, BSS08, I10, BSS12} and references therein for these problems. We note that there are some works in \cite{YT84, T91} for time decay estimates and application to nonlinear problem on exterior domains before these works.  

Strichartz estimates do not hold in the case of general domains.
In this case, compactness method is a useful tool to construct solutions. Vladimirov \cite{V84} considered (\ref{NLS}) on bounded domains in two space dimension  when $g(u)=|u|^2u$ and proved existence and uniqueness of solutions in the energy space $H^1_0(\Omega)$ applying Trudinger's inequality. Ogawa \cite{Og90} improved this result and proved the same results on general domains (not necessarily bounded) when $g(u)=|u|^{\alpha}u$ with $\alpha \leq 2$. They mainly discussed uniqueness of solutions, while construction of solutions was done by Galerkin's method which depends on the pioneer work by J.-L. Lions \cite{L69}. Galerkin's method is a typical method based on compactness arguments. Here, let us review compactness arguments to construct solutions.
\begin{itemize}
\setlength{\itemsep}{3pt}

\item First, we consider approximate problems corresponding to the original equation. Various type of approximations can be considered, e.g., Yosida type regularization, Galerkin approximation, parabolic regularization, truncated approximation, Friedrich's mollifier, etc. It is important to choose a natural regularization according to the nonlinearity. 

\item In a next step, by using the conservation laws  or energy type estimates of approximate equations, one can obtain the uniform estimates on approximate solutions.

\item Under the uniform estimates of approximate solutions, we deduce that there is a limit function of a subsequence of approximate solutions in weak or weak-* topology. In this step, Ascoli-Arzel\`{a}'s theorem or Banach-Alaoglu's theorem can be used.

\item To confirm that the limit function actually satisfies the original equation, we need to strengthen the convergence of approximate solutions, for instance, to prove strong convergence in $L^p(\Omega )$ for some $p \in [1, \infty ]$. In this step, Rellich-Kondrachev's theorem can be used and it is necessary to reduce the convergence problem to arguments on bounded domains in some sense. 

\item After construction of solutions, uniqueness and continuous dependence are discussed separately. It is necessary to start arguments totally different from construction of solutions all over again. 

\end{itemize}

Compactness arguments are applicable to a quite wide type of nonlinear evolution equations as can be seen in \cite{L69}. Our aim in this paper is to construct solutions by a simple alternative approach which is independent of compactness arguments. More precisely, we prove that approximate solutions form a Cauchy sequence in some Banach space. 
Although the range of application to this approach might be restricted compared to compactness methods, there are several advantages in this approach in the following sense.
\begin{itemize}
\setlength{\itemsep}{3pt}

\item As one can see the process above in compactness arguments, we need to use at least two compactness theorems in functional analysis to construct solutions. Our approach of constructing solutions does not need any compactness theorem. 

\item The proof is not only simpler but also more constructive than the one by compactness method since solutions are constructed by using completeness of Banach space directly. 

\item We can establish construction of solutions, uniqueness and continuous dependence by the same procedure. 


\end{itemize}

In this paper, we consider (\ref{NLS}) in the energy space $H^1_0 (\Omega )$. Note that $\Delta$ is self-adjoint in $H^{-1} (\Omega )$ with domain $H^1_0 (\Omega )$, and hence the unitary group $U(t)=e^{it\Delta}$ is well-defined in $H^{-1}(\Omega )$. We consider three different types of nonlinearities as follows; 
\begin{enumerate}[(A)]
\setlength{\itemsep}{5pt}

\item $g(u)$: local nonlinearity which has polynomial growth in 2D,

\item $\ds g(u)=u\log |u|^2$: logarithmic nonlinearity,

\item $\ds g(u)=i\frac{u}{|u|^{\alpha}}$ with $0<\alpha <1$: damping nonlinearity.

\end{enumerate}

For the case (A), there are several previous works which are related to our approach. Cazenave and Haraux \cite[Chapter 7]{CH98} constructed solutions in the energy space $H^1(\R^N)$ by proving that the sequence of truncated approximate solutions is a Cauchy sequence. Note that their argument depends on Strichartz estimates on $\Omega =\R^N$. This approach has an advantage of establishing conservation of energy compared with the contraction argument in Kato's method \cite{K87}. See also \cite{Oz06} for the simple derivation of conservation laws by using integral equations. 
Fujiwara, Machihara and Ozawa \cite{FMO15} considered a system of semirelativistic equations:
\begin{align}
\label{eq:1.2}
 \l\{
\begin{array}{lll}
i \del_t u + (m_u^2-\del_x^2 )^{1/2} u = \lambda \overline{u}v &\text{in} &\R\times\R, \\[3pt] 
i \del_t v + (m_v^2-\del_x^2 )^{1/2} v = \frac{\overline{\lambda} }{c}u^2 &\text{in} &\R\times\R, \\[3pt]
u(0)=\varphi , ~v(0) = \psi  &\text{on} &\R .
\end{array}
\r.
\end{align}
They constructed solutions of (\ref{eq:1.2}) in the energy space $H^{1/2}(\R)\times H^{1/2}(\R)$ by proving that the sequence of approximate solutions is a Cauchy sequence. Their argument is based on a Yudovich argument\footnote{The exponent $p$ of $L^p(\R )$ is used as a parameter in this argument. Since $H^{1/2}(\R) \subset L^p(\R )$ for any $2<p<\infty$, one can take the limit as $p\to \infty$. Combining with the Gronwall inequality, one can prove that approximate solutions form a Cauchy sequence. Originally, this type of argument was used to prove uniqueness in \cite{Y63}. }. We note that some estimates of Yosida type regularization are derived by the technique of Fourier analysis in \cite{FMO15}. 
Spectral analysis would be possible to do similar analysis on general domains, but here we take another simpler approach which depends on using $L^p$-estimate of approximate solutions (see Lemma \ref{lem:2.4}). The author and Ozawa \cite{HO16} considered generalized derivative nonlinear Schr\"{o}dinger equations:
\begin{align}
\label{eq:1.3}
 \l\{
\begin{array}{lll}
 i \del_{t}u + \del_x^2 u +i|u|^{2\sigma}\del_x u= 0 &\text{in} &\R\times\Omega , \\[3pt] 
u=0 &\text{on} &\R\times\del\Omega ,\\[3pt]
 u(0) =\varphi &\text{on} &\Omega ,
\end{array}
\r.
\end{align}
where $\Omega \subset \R$ is an open interval and $\sigma >0$. They constructed $H^2(\Omega )$ solutions of (\ref{eq:1.3}) 
by a similar approach. Since we consider $H^1_0(\Omega )$ solutions in two space dimensions in case (A), we need to consider more careful calculations from the viewpoint of Sobolev embedding. We will consider the problem of case (A) in Section \ref{sec:2}. 

For case (B) and case (C), they are more delicate to apply our approach since nonlinearities have a singularity at the origin. We will discuss these problems in Section \ref{sec:3} and Section \ref{sec:4}.  

\section{The nonlinear Schr\"{o}dinger equation in 2D}
\label{sec:2}
In this section, we consider the equation (\ref{NLS}) in two space dimensions\footnote{In the case of dimension $N=1$, the well-posedness in $H^1_0 (\Omega )$ is proved more easily due to the embedding $H^1_0 (\Omega )\subset L^{\infty} (\Omega )$.}. To state the assumption of nonlinearity $g(u)$, we prepare some notations. The following notation depends on the one in Cazenave \cite[Chapter 3]{C03}.

Let $f: \Omega \times \R_{+} \to \R$ be such that $f(x, u)$ is measurable in $x$ and continuous in $u$. Assume that 
\begin{align*}
f(x, 0) =0 ~\text{for a.e.}~ x \in \Omega
\end{align*}
and that for every $K>0$ there exists $L(K) >0$ such that
\begin{align*}
|f(x, u)-f(x ,v)| \leq L(K) |u-v|~\text{for a.e.}~x \in \Omega ~\text{and}~0\leq u, v \leq K,
\end{align*}
where the function $L:\R_{+}\to\R_{+}$ satisfies
\begin{align*}
L(t) \leq C(1+t^{\alpha}) ~\text{with}~ 0 \leq \alpha < \infty .
\end{align*}
We extend $f$ to the complex plane by
\begin{align*}
f(x,u) = \frac{u}{|u|} f(x, |u|) ~\text{for}~ u \in \C \setminus \{ 0\} ,
\end{align*}
and set 
\begin{align*}
g(u) (x) =f(x,u(x)) ~\text{a.e. $x$ in}~  \Omega 
\end{align*}
for all measurable $u: \Omega \to \C$. We define 
\begin{align*}
F(x, u)=\int_{0}^{u} f(x ,s)ds\quad \text{for}~x\in\Omega ~\text{and}~u\geq 0,
\end{align*}
and
\begin{align*}
G(u) = \int_{\Omega} F(x, |u(x)|) dx.
\end{align*}
The equation (\ref{NLS}) has formally the following conserved quantities:
\begin{align}
\tag{Mass} M(u) &= \int_{\Omega} |u|^2 dx , \\
\tag{Energy} E(u) &= \frac{1}{2}\int_{\Omega} |\nabla u|^2dx - G(u).
\end{align}

Our main aim of this section is to give a simple alternative proof of the following theorem.
\begin{theorem}[Local well-posedness \cite{V84,Og90,C03}]
\label{thm:2.1}
Let $\Omega$ be an open subset of $\R^2$. Let $g$ satisfy the assumption above with $\alpha \leq 2$. For every $\varphi \in H^1_0 (\Omega)$, there exist $0<T_{\rm min}, T_{\rm max}\leq \infty$ and a unique, maximal solution $u\in C((-T_{\rm min},T_{\rm max}) , H^1_0 (\Omega ) )\cap C^1((-T_{\rm min},T_{\rm max}), H^{-1}(\Omega ))$ of the equation \textup{(\ref{NLS})}. Furthermore, the following properties hold:
\begin{enumerate}[\rm (1)]
\setlength{\itemsep}{3pt}

\item $M(u(t))=M(\varphi )$ and $E(u(t))=E(\varphi )$ for all $t \in (-T_{\min}, T_{\max})$.

\item Continuous dependence is satisfied in the following sense; if $\varphi_n\to\varphi$ in $H^1_0(\Omega )$ and if $I\subset (-T_{\rm min} (\varphi ) ,T_{\rm max} (\varphi ) )$ is a closed interval, then the maximal solution $u_n$ of \textup{(\ref{NLS})} with $u_n(0)=\varphi_n$ is defined on $I$ for $n$ large enough and satisfies $u_n \to u$ in $C(I, H^1_0(\Omega ))$.
\end{enumerate}
\end{theorem}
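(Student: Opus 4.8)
The plan is to prove Theorem~\ref{thm:2.1} by constructing solutions as a limit of approximate solutions, showing these form a Cauchy sequence in a suitable Banach space rather than invoking compactness. First I would set up a natural truncation of the nonlinearity: for each $n$, replace $g$ by a globally Lipschitz nonlinearity $g_n$ obtained by cutting off $f(x,u)$ for $|u|$ large (say $f_n(x,u) = f(x,\min\{|u|,n\}) \cdot u/|u|$), so that the associated integral equation
\begin{align*}
u_n(t) = U(t)\varphi + i\int_0^t U(t-s) g_n(u_n(s))\,ds
\end{align*}
can be solved globally in $C(\R, H^1_0(\Omega))$ by a standard Banach fixed point argument, using only that $U(t)=e^{it\Delta}$ is unitary on $L^2$ and on $H^1_0$ and that $g_n$ is Lipschitz from $H^1_0$ to $H^{-1}$ (here the 2D Sobolev embedding $H^1_0(\Omega)\hookrightarrow L^q(\Omega)$ for all $q<\infty$ is what lets the truncated polynomial nonlinearity act boundedly). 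I would also record the a priori bounds: from the conservation of mass and energy for the truncated problem (derived directly from the integral equation as in \cite{Oz06}) together with $G$-estimates, one gets a uniform bound $\|u_n(t)\|_{H^1_0} \le R$ on a time interval $[-T,T]$ depending only on $\|\varphi\|_{H^1_0}$, and this $T$ can be taken uniform in $n$; in particular for $n > R$ the truncation is inactive, but the point is to pass to the limit cleanly.

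The heart of the argument is the Cauchy estimate, and this is where Lemma~\ref{lem:2.4} (the $L^p$-estimate of approximate solutions, a Yudovich/Yudovich-type device) enters. For $m,n$ large I would estimate $w = u_n - u_m$ in $L^2(\Omega)$: since $U(t)$ is $L^2$-unitary,
\begin{align*}
\|w(t)\|_{L^2} \le \int_0^t \|g_n(u_n(s)) - g_m(u_m(s))\|_{L^2}\,ds,
\end{align*}
and the difference of nonlinearities splits into a part controlled by the Lipschitz bound $L(|u_n|+|u_m|) \lesssim 1 + |u_n|^\alpha + |u_m|^\alpha$ times $|w|$, plus an error term from the mismatch of truncations that tends to zero. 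Applying Hölder with a large finite exponent $p$ — writing $\||u_n|^\alpha |w|\|_{L^1} \le \|u_n\|_{L^{\alpha p'}}^\alpha \|w\|_{L^p}$ — and then interpolating $\|w\|_{L^p}$ between $\|w\|_{L^2}$ and the uniformly bounded $\|w\|_{H^1_0} \le 2R$, one obtains an inequality of the form $\|w(t)\|_{L^2} \le \eta_{m,n} + C\int_0^t \|w(s)\|_{L^2}^{1-\theta_p}\,ds$ with $\theta_p \to 0$ as $p\to\infty$ and $\eta_{m,n}\to 0$. A Gronwall-type / Osgood-type argument, optimizing over $p$, then yields $\|u_n - u_m\|_{C([-T,T],L^2)} \to 0$. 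Thus $u_n \to u$ in $C([-T,T],L^2(\Omega))$; combined with the uniform $H^1_0$ bound and weak-* convergence, $u(t) \in H^1_0(\Omega)$ with $\|u(t)\|_{H^1_0} \le R$, and interpolation upgrades the convergence to $C([-T,T], L^q(\Omega))$ for every $q<\infty$, which is enough to pass to the limit in the integral equation and conclude $u \in C([-T,T], H^1_0(\Omega)) \cap C^1([-T,T], H^{-1}(\Omega))$ solves \eqref{NLS} with the conservation laws; a standard continuation argument produces the maximal interval $(-T_{\min}, T_{\max})$.

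Uniqueness and continuous dependence come from \emph{the same} estimate. For uniqueness, apply the $L^2$ difference estimate above to two genuine solutions (now with no truncation error, so $\eta = 0$) and conclude $w \equiv 0$ by the Osgood argument. For continuous dependence, given $\varphi_n \to \varphi$ in $H^1_0(\Omega)$, the uniform local existence time and the uniform $H^1_0$ bounds let one run the difference estimate between $u_n$ and $u$ with $\eta_n = \|\varphi_n - \varphi\|_{L^2} \to 0$, obtaining $u_n \to u$ in $C(I, L^2)$; then the uniform $H^1_0$ bound plus conservation of energy forces $\|u_n(t)\|_{H^1_0} \to \|u(t)\|_{H^1_0}$ (lower semicontinuity gives one direction, energy conservation the other), and with weak convergence in $H^1_0$ this gives norm convergence, hence $u_n \to u$ in $C(I, H^1_0(\Omega))$; the standard argument bridging a closed subinterval $I$ of the maximal interval handles the globalization of the time of existence. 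The main obstacle is the Cauchy estimate in the second paragraph: because the nonlinearity is merely $H^1_0 \to H^{-1}$ (not $H^1_0 \to H^1_0$) in 2D at the critical polynomial growth, one cannot close the difference estimate in $H^1_0$ directly, and the Yudovich-type trick of working in $L^2$, using the borderline Sobolev embedding $H^1_0(\Omega) \hookrightarrow L^p(\Omega)$ for all $p < \infty$ with constants growing like $\sqrt{p}$, and letting $p \to \infty$ is the essential device — getting the $p$-dependence of the constants and the interpolation exponents to cooperate so that the Osgood argument applies is the delicate point.
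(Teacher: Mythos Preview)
Your proposal is correct and follows essentially the same route as the paper's Section~\ref{sec:2.1}: truncate the nonlinearity, obtain uniform $H^1_0$ bounds from the approximate conservation laws, prove the approximants are Cauchy in $C([-T,T],L^2)$ via the Yudovich device (H\"older with a free exponent $p$, the $\sqrt{p}$ growth of the 2D embedding $H^1\hookrightarrow L^{2p}$, a Gronwall-type inequality, then $p\to\infty$ on a short subinterval and iterate), and reuse the same difference estimate for uniqueness and continuous dependence. Two small corrections: in the differential form the quantity to bound is $\int |u_n|^\alpha |w|^2$ (your H\"older line drops one factor of $|w|$), and Lemma~\ref{lem:2.4} in the paper is the key estimate for the alternative \emph{Yosida} regularization proof in Section~\ref{sec:2.2}, not for the truncation argument you outline.
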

\begin{remark}
When $\alpha >2$, the existence of weak solutions in $H^1_0(\Omega )$ is already known (see \cite[Theorem 3.3.5]{C03}). However, the well-posedness result in $H^1_0(\Omega )$ on general domains in this case seems to be still an open problem. We refer to \cite{BGT04b,A08} for this problem on compact regular domains or exterior domains.
\end{remark}
For the proof of Theorem \ref{thm:2.1}, we consider two types of approximate problems to the equation (\ref{NLS}). One method is to truncate the nonlinearity $g(u)$ for large values of $u$. This seems to be the most appropriate thing for a local nonlinearity. The proof of using truncated approximation is discussed in Section \ref{sec:2.1} and it is related to the arguments in Section \ref{sec:3} and Section \ref{sec:4}. The other method is to use Yosida type regularization by the resolvent of Laplacian. One of the important advantage using this approximation is that one can apply this method to a system of the equations in the same way (see \cite[Remark 3.3.12]{C03} and \cite{FMO15}). It is also possible to treat wider class of the nonlinearities, for instance, which contain derivatives (see \cite{HO16}). For these reasons, we also give the proof of Theorem \ref{thm:2.1} using Yosida type regularization in Section \ref{sec:2.2}.

\subsection{Truncated approximation}
\label{sec:2.1}
Given $m\in\N$. Set
\begin{align*}
g_m(u)&=\l\{
\begin{array}{ll}
g(u)  &~\text{if}~|u| \leq m, \\[5pt]
\ds \frac{u}{m}g(m) &~\text{if}~ |u| \geq m.
\end{array}
\r.
\end{align*}
We consider the following approximate problem:
\begin{align}
\l\{
 \begin{array}{ll}
 i \partial_{t}u_m + \Delta u_m +g_m(u_m)= 0, \\
 u_m(0)=\varphi .
\end{array}
\r. 
\label{eq:2.1}
 \end{align}
Since $g_m:\C\to\C$ is Lipschitz continuous for each $m\in\N$, there exists a unique solution $u_m \in C(\R, H^1_0(\Omega)) \cap C^1(\R ,H^{-1}(\Omega ))$ of (\ref{eq:2.1}). Furthermore, 
\begin{align}
\label{eq:2.2}
M(u_m(t)) =M(\varphi ) \quad \text{and} \quad E_m(u_m (t)) =E_m(\varphi )
\end{align}
for all $t\in \R$. Here, $E_m$ is defined by
\begin{align*}
E_m(u)=\frac{1}{2}\int_{\Omega} |\nabla u|^2dx-G_m(u),
\end{align*}
where $G_m$ is defined by
\begin{align*}
G_m (u) = \int_{\Omega}\int_{0}^{|u(x)|}g_m(s)dsdx
\end{align*}
for all measurable $u:\Omega \to\C$. 
In the same way as  Step 2 in the proof of Theorem 3.3.5 in \cite{C03}, we deduce that there exists $T=T(\varphi ) >0$ such that
\begin{align}
\label{eq:2.3}
M:=\sup_{m \in \N} \| u_m\|_{C([-T,T], H^1_0 )} < \infty .
\end{align}

Next, we prove $(u_m)_{m\in\N}$ is a Cauchy sequence in $C([-T,T], L^2(\Omega ))$. By using the equation (\ref{eq:2.1}), we have
\begin{align*}
\frac{d}{dt}\| u_m-u_n\|_{L^2}^2 
&= 2\mathrm{Im}(i\partial_t u_m-i\partial_t u_n, u_m-u_n)\\
&= -2\mathrm{Im}\rbra[ g_m(u_m)-g_n(u_n),u_m-u_n]\\
&=-2\mathrm{Im}\rbra[ g_m(u_m)-g_n(u_m),u_m-u_n] \\
&\quad -2\mathrm{Im}\rbra[ g_n(u_m)-g_n(u_n),u_m-u_n]\\
&=A_1 +A_2.
\end{align*}

We begin with the estimate of $A_1$. First, we note that 
\begin{align}
\label{eq:2.4}
|g_m(u)-g_m(v)| \leq C(1+|u|^2+|v|^2) |u-v|~\text{for all}~ u , v \in \C ,
\end{align}
where $C$ is independent of $m \in \N$. By (\ref{eq:2.4}), we have
\begin{align*}
|g_m(u_m)-g(u_m)| &\leq\l| \chi_{|u_m| \geq m} \l( \frac{u_m}{m}g(m) -g(u_m) \r)\r| \\
&\leq C\chi_{|u_m| \geq m} |u_m|^3,
\end{align*}
and
\begin{align}
\label{eq:2.5}
\| g_m(u_m)-g(u_m)\|_{L^2}^2 &\leq C \int_{|u_m| \geq m} |u_m|^{6} dx \\
&= C \int_{|u_m| \geq m} |u_m|^{7}|u_m|^{-1} dx \notag\\
&\leq C(M)m^{-1}.\notag
\end{align}
Therefore, $A_1$ is estimated as
\begin{align}
\label{eq:2.6}
A_1 &\leq C\| g_m(u_m) -g_n(u_m)\|_{L^2} \| u_m -u_n\|_{L^2} \\
&\leq 2\| \varphi\|_{L^2}\l( \| g_m(u_m) -g(u_m)\|_{L^2} +\| g(u_m) -g_n(u_m)\|_{L^2} \r) \notag\\
&\leq C(M) \l( \frac{1}{\sqrt{m}} +\frac{1}{\sqrt{n}} \r) .\notag
\end{align}
We estimate $A_2$ by applying a Yudovich argument \cite{Y63}. This is closely related to the argument by Vladimirov \cite{V84} and Ogawa \cite{Og90} (see also \cite{BGT04a}). By (\ref{eq:2.4}), we have
\begin{align*}
A_2 &\leq C\int_{\Omega} (1+|u_m|^2+|u_n|^2) |u_m -u_n|^2 dx \\
&\leq C\| u_m-u_n\|_{L^2}^2 
+C(\| u_m\|_{L^{2p}}^2 +\| u_n\|_{L^{2p}}^2) \| u_m -u_n\|_{L^{2p'}}^2,
\end{align*}
where for any $2<p<\infty$. We use the following estimates
\begin{align*}
\| v\|_{L^{2p'}} &\leq \| v\|_{L^2}^{1-\frac{3}{2p}} \| v\|_{L^6}^{\frac{3}{2p}} ,\\
\| v\|_{L^{2p}} &\leq C \sqrt{p} \| v\|_{H^1},
\end{align*}
where $C$ is independent of $p$. We refer to \cite[Lemma 2]{Og90} for the second inequality. Then, we have
\begin{align}
\label{eq:2.7}
A_2 &\leq \| u_m -u_n\|_{L^2}^2 +C(M)p  \| u_m -u_n\|_{L^2}^{2\l( 1-\frac{3}{2p}\r)} \\
&\leq pC(M) \| u_m -u_n\|_{L^2}^{2\l( 1-\frac{3}{2p}\r)}. \notag
\end{align}
By (\ref{eq:2.6}) and (\ref{eq:2.7}), we obtain
\begin{align*}
\frac{d}{dt} \| u_m-u_n\|_{L^2}^2 \leq 
C(M) \l( \frac{1}{\sqrt{m}} +\frac{1}{\sqrt{n}} \r)
+C(M)p\| u_m -u_n\|_{L^2}^{2\l( 1-\frac{3}{2p}\r)} .
\end{align*}
Applying the Gronwall type inequality, we obtain
\begin{align*}
\| u_m -u_n  \|_{C([-T,T],L^2)}^2 \leq \ \l[ \l( C(M)T \l(  \frac{1}{\sqrt{m}} +\frac{1}{\sqrt{n}} \r) \r)^{\frac{3}{2p}} +C(M)T
\r]^{\frac{2p}{3}}.
\end{align*}
Taking the $\limsup_{m,n\to\infty}$ in the preceding inequality, we have
\begin{align}
\label{eq:2.8}
\limsup_{m,n \to \infty} \| u_m -u_n  \|_{C([-T,T],L^2)}^2 \leq  \l( C(M)T \r)^{\frac{2p}{3}}.
\end{align}
Let $T_0>0$ such that $C(M)T_0 <1$. Taking the limit in (\ref{eq:2.8}) as $p \to \infty$, we deduce that $(u_m)_{m\in\N}$ is a Cauchy sequence in $C([-T_0, T_0], L^2(\Omega ))$. Since $T_0$ only depends on $M$, iterating the same process yields that $(u_m)_{m\in \N}$ is a Cauchy sequence in $C([-T, T], L^2(\Omega ) )$.  Hence, there exists $u \in C([-T,T] , L^2(\Omega ))$ such that $u_m \to u~ \text{in}~C([-T,T], L^2(\Omega ))$. Combining with (\ref{eq:2.2}) and (\ref{eq:2.3}), we deduce that $u \in L^{\infty}([-T,T], H^1_0(\Omega))$ and
\begin{align}
\label{eq:2.9}
M(u(t)) = M(\varphi ) \quad\text{and}\quad E(u(t)) \leq E(\varphi )
\end{align}
for all $t\in [-T,T]$. We also have
\begin{align}
\label{eq:2.10}
u_m \to u~\text{in}~C([-T,T], L^p (\Omega ))
\end{align}
for any $2\leq p<\infty$ by elementary interpolation inequality.

Next, we shall prove that the function $u$ satisfies (\ref{NLS}) and lies in $C([-T,T], H^1_0(\Omega ))$. Note that $u_m$ is a solution of the integral equation
\begin{align}
\label{eq:2.11}
u_m(t)=U(t)\varphi +i\int_{0}^{t} U(t-s)g_m(u_m(s))ds \quad\text{for all}~t\in [-T,T]. 
\end{align}
We write 
\begin{align*}
|g_m(u_m)-g(u)|\leq |g_m(u_m)-g(u_m) | +|g(u_m) -g(u)|.
\end{align*}
By (\ref{eq:2.5}), we have
\begin{align*}
\| g_m(u_m)-g(u_m)\|_{C([-T,T],L^2)} \leq C(M)m^{-\frac{1}{2}}
\underset{m\to\infty}{\longrightarrow} 0.
\end{align*}
By (\ref{eq:2.4}) and (\ref{eq:2.10}), it is easily verified that
\begin{align*}
\| g(u_m)-g(u)\|_{C([-T,T],L^2)}\underset{m\to\infty}{\longrightarrow} 0.
\end{align*}
Hence, we deduce that $g_m(u_m)\to g(u)$ in $C([-T,T], L^2(\Omega ))$. Taking the limit in the integral equation (\ref{eq:2.11}) as $m \to \infty$, we conclude that
\begin{align}
\label{eq:2.12}
u(t) = U(t)\varphi +i\int_{0}^{t} U(t-s)g(u(s)) ds\quad\text{for all}~t\in [-T,T].  
\end{align}
Since $g(u) \in C([-T,T], H^{-1}(\Omega))$, it follows that $u \in C^1([-T,T],H^{-1}(\Omega))$. 
Therefore, $u$ satisfies the equation (\ref{NLS}). It follows from the equation (\ref{NLS}) that $\Delta u \in C([-T,T], H^{-1}(\Omega))$. Hence, we deduce that $u \in C([-T,T], H^1_0(\Omega ))$. 

If the uniqueness holds, it follows from (\ref{eq:2.9}) that $E(u(t))=E(\varphi )$ for all $t \in [-T,T]$. Uniqueness and continuous dependence are proved by the same process as the construction of solution above. We only prove continuous dependence. 

Let $(-T_{\min}, T_{\max})$ be the maximal interval of existence to the solution $u$. Let $\varphi_n \to \varphi$ in $H^1_0(\Omega )$ and closed interval $I \subset (-T_{\min}, T_{\max})$ and consider the maximal solution $u_n$ with initial condition $u_n (0)=\varphi_n$.  From the conservation laws, we obtain that
\begin{align*}
M=\sup_{n \in \N} \| u_n\|_{C(I, H^1_0 )} \leq C(\| \varphi \|_{H^1_0}) < \infty.
\end{align*}
By using the equation (\ref{NLS}), we have
\begin{align*}
\frac{d}{dt}\| u-u_n\|_{L^2}^2 &= -2\mathrm{Im}\rbra[ g(u)-g(u_n),u-u_n]\\
&\leq \| u -u_n\|_{L^2}^2 +C(M)p  \| u -u_n\|_{L^2}^{2\l( 1-\frac{3}{2p}\r)} \\
&\leq pC(M) \| u -u_n\|_{L^2}^{2\l( 1-\frac{3}{2p}\r)} ,
\end{align*}
where we estimated in the same way as $A_2$. Applying the Gronwall type inequality, we deduce that
\begin{align*}
\| u -u_n \|_{C([-T,T],L^2)}^2 \leq \ \l( C(M)T\r)^{\frac{2p}{3}}.
\end{align*}
If we take $T_0>0$ such that $C(M)T_0 <1$ and $p \to \infty$, we deduce that $u_n \to u$ in $C([-T_0, T_0], L^2(\Omega ))$. Iterating the same process yields that $u_n \to u$ in $C(I, L^2(\Omega) )$. Conservation of energy implies that $\| \nabla u_n\|_{L^2}^2$ converges to $\| \nabla u\|_{L^2}^2$ uniformly on $I$. Hence, we deduce that  $u_n \to u$ in $C(I, H^1_0(\Omega ))$ and this completes the proof.
%
\subsection{Yosida type regularization}
\label{sec:2.2}
Given $m\in\N$, we define the Yosida type operator $J_m$ on $H^{-1}(\Omega)$ by
\begin{align}
\label{eq:2.13}
J_m=\left( I-\frac{1}{m}\Delta \right )^{-1}.
\end{align}
More precisely, for every $f \in H^{-1}(\Omega)$, $v_m=J_mf \in H^1_0 (\Omega )$ is the unique solution of 
\begin{align*}
v_m -\frac{1}{m}\Delta v_m =f.
\end{align*}
We recall the fundamental properties of the operator $J_m$. We refer to \cite{Br11, CH98, C03} for the proof of following lemmas.  
\begin{lemma}
\label{lem:2.2}
Let $X$ be any of the spaces $H^1_0 (\Omega ),~ L^2(\Omega)$ or $H^{-1}(\Omega)$. Then, 
\begin{align*}
\| J_mf\|_X \leq \| f \|_X \quad \text{for all}~f \in X.
\end{align*}
If $f \in L^p (\Omega) \cap H^{-1}(\Omega )$ for some $p\in [1,\infty)$, then $J_mf \in L^p(\Omega )$ and
\begin{align*}
\| J_mf\|_{L^p} \leq \| f\|_{L^p}.
\end{align*}
In particular, $J_m$ can be uniquely extended by continuity to an operator of $\scL (L^p(\Omega ))$ with $\| J_m\|_{\scL (L^p)} \leq 1$, where $1\leq p<\infty$.
\end{lemma}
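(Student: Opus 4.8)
The plan is to extract every bound directly from the variational (weak) formulation of the defining equation together with the sign of the Dirichlet form, using nothing beyond $v_m:=J_mf\in H^1_0(\Omega)$ and the identity $\Delta v_m=m(v_m-f)$. For $f\in H^{-1}(\Omega)$ the function $v_m$ is the unique element of $H^1_0(\Omega)$ with
\begin{align*}
\int_\Omega v_m\overline{\phi}\,dx+\frac{1}{m}\int_\Omega\nabla v_m\cdot\nabla\overline{\phi}\,dx=\langle f,\phi\rangle_{H^{-1},H^1_0}\qquad\text{for all }\phi\in H^1_0(\Omega),
\end{align*}
existence and uniqueness being a direct application of the Lax--Milgram theorem. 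Taking $\phi=v_m$ when $f\in L^2(\Omega)\cap H^{-1}(\Omega)$ and keeping real parts gives $\|v_m\|_{L^2}^2+\frac{1}{m}\|\nabla v_m\|_{L^2}^2=\re\int_\Omega f\overline{v_m}\,dx\le\|f\|_{L^2}\|v_m\|_{L^2}$, hence $\|J_mf\|_{L^2}\le\|f\|_{L^2}$. For the $H^{-1}$ bound I would invoke the spectral theorem: $-\Delta$ is a nonnegative self-adjoint operator on $H^{-1}(\Omega)$ with domain $H^1_0(\Omega)$, so $J_m=(I+\tfrac{1}{m}(-\Delta))^{-1}=\psi_m(-\Delta)$ with $0<\psi_m(\lambda)=(1+\lambda/m)^{-1}\le1$, whence $\|J_m\|_{\scL(H^{-1})}=\sup_{\lambda\ge0}\psi_m(\lambda)\le1$. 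For the $H^1_0$ bound, when $f\in H^1_0(\Omega)$ one has $\Delta v_m=m(v_m-f)\in L^2(\Omega)$, so $-\Delta v_m$ is a legitimate test function after integration by parts, and one obtains $\|\nabla v_m\|_{L^2}^2+\frac{1}{m}\|\Delta v_m\|_{L^2}^2=\re\int_\Omega\nabla f\cdot\nabla\overline{v_m}\,dx\le\|\nabla f\|_{L^2}\|\nabla v_m\|_{L^2}$, i.e.\ $\|J_mf\|_{H^1_0}\le\|f\|_{H^1_0}$ (alternatively this follows from the same functional calculus, since $J_m$ commutes with $-\Delta$).

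For the $L^p$ estimate with $f\in L^p(\Omega)\cap H^{-1}(\Omega)$ and $1<p<\infty$, the idea is to test with the bounded function $\phi=(|v_m|^2+\eps)^{(p-2)/2}v_m\in H^1_0(\Omega)$ ($\eps>0$). A direct computation gives
\begin{align*}
\re\int_\Omega\nabla v_m\cdot\nabla\overline{\phi}\,dx
&=\int_\Omega(|v_m|^2+\eps)^{\frac{p-2}{2}}|\nabla v_m|^2\,dx\\
&\quad+(p-2)\int_\Omega(|v_m|^2+\eps)^{\frac{p-4}{2}}|v_m|^2\,\bigl|\nabla|v_m|\bigr|^2\,dx,
\end{align*}
which is nonnegative for every $p\ge1$ (bound the second integral below by $(p-2)\int_\Omega(|v_m|^2+\eps)^{(p-2)/2}|\nabla v_m|^2\,dx$ using $\bigl|\nabla|v_m|\bigr|\le|\nabla v_m|$ and $|v_m|^2\le|v_m|^2+\eps$). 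Discarding it, the identity $\re\int_\Omega v_m\overline{\phi}\,dx=\int_\Omega(|v_m|^2+\eps)^{(p-2)/2}|v_m|^2\,dx$ together with H\"older's inequality and $|v_m|(|v_m|^2+\eps)^{(p-2)/2}\le|v_m|^{p-1}$ yields a bound on $\int_\Omega(|v_m|^2+\eps)^{(p-2)/2}|v_m|^2\,dx$ in terms of $\|f\|_{L^p}$; letting $\eps\downarrow0$ by monotone convergence gives $\|v_m\|_{L^p}\le\|f\|_{L^p}$. The endpoint $p=1$ is analogous with $\phi=v_m/\sqrt{|v_m|^2+\eps}$: the $\Delta$-term again has nonnegative real part, $|\langle f,\phi\rangle|\le\|f\|_{L^1}$ uniformly in $\eps$, and $\re\int_\Omega v_m\overline{\phi}\,dx=\int_\Omega|v_m|^2/\sqrt{|v_m|^2+\eps}\,dx\uparrow\|v_m\|_{L^1}$.

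The point that requires the most care — and which I expect to be the main obstacle — is that on an unbounded $\Omega$ the solution $v_m$ is a priori known only to lie in $H^1_0(\Omega)$, so the integrability needed to legitimize the test functions and the limit $\eps\downarrow0$ above is not available for free; one remedies this either by inserting a spatial cutoff into the test function and sending it to infinity only at the end, or, more structurally, by first establishing that $J_m$ is positivity preserving and sub-Markovian (by testing against $v_m^{\pm}$ together with the maximum principle, or via the representation $J_m=m\int_0^\infty e^{-mt}e^{t\Delta}\,dt$ and the fact that the Dirichlet heat semigroup $e^{t\Delta}$ is an $L^p$-contraction for every $p\in[1,\infty]$ regardless of the geometry of $\Omega$), from which the $L^1$ and $L^\infty$ contractivity follows, and then $1<p<\infty$ by Riesz--Thorin interpolation. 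Finally, since $L^p(\Omega)\cap H^{-1}(\Omega)$ is dense in $L^p(\Omega)$ for $1\le p<\infty$, $J_m$ extends uniquely by continuity to an operator in $\scL(L^p(\Omega))$ with $\|J_m\|_{\scL(L^p)}\le1$.
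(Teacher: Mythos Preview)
The paper does not prove this lemma; it simply records it as a standard fact and points to \cite{Br11, CH98, C03}. So there is no ``paper's proof'' to compare against beyond the statement that the result is classical.

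Your outline for $X=L^2$, $H^1_0$, $H^{-1}$ is the standard one and is fine. The direct $L^p$ argument, however, is not quite clean as written. First, the test function $\phi=(|v_m|^2+\eps)^{(p-2)/2}v_m$ is \emph{not} bounded when $p>2$ (only the prefactor is bounded when $p<2$), so for $p>2$ you have not justified $\phi\in H^1_0(\Omega)$; this is exactly the integrability issue you flag later, but it arises already here, not only on unbounded $\Omega$. Second, the pointwise inequality $|v_m|\,(|v_m|^2+\eps)^{(p-2)/2}\le|v_m|^{p-1}$ that you use on the right-hand side holds for $p\le 2$ but is reversed for $p>2$, so the H\"older step does not close in that range either. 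Thus your ``direct'' computation really only covers $1<p\le 2$ (and your $p=1$ variant), and for $p>2$ you must fall back on one of the structural arguments you list.

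Of those, the representation $J_m=m\int_0^\infty e^{-mt}e^{t\Delta}\,dt$ together with the $L^p$-contractivity of the Dirichlet heat semigroup is the cleanest and is one of the proofs found in the cited references; it handles all $1\le p\le\infty$ at once on an arbitrary open set, without any regularity on $\partial\Omega$. The Stampacchia-type truncation argument (testing against $(v_m-k)^+$ to get an $L^\infty$ bound, then dualizing for $L^1$ and interpolating) is another standard route used in \cite{C03}. Either of these is what you should present as the actual proof of the $L^p$ statement; the formal $\phi=(|v_m|^2+\eps)^{(p-2)/2}v_m$ computation is a useful heuristic but, as you yourself suspected, it is not a proof on its own for general $p$ and $\Omega$.
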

\begin{lemma}
\label{lem:2.3}
Let $X$ be any of the spaces $H^1_0(\Omega),\ H^{-1}(\Omega),\ $and $L^p(\Omega)$ with $1<p<\infty$ and let $X^{\ast}$ be its dual space.  Then,
\begin{enumerate}[\rm (1)]
\setlength{\itemsep}{3pt}

\item $\tbra[J_mf,g]_{X,X^{\ast}}=\tbra[f,J_mg]_{X,X^{\ast}}$ for all $f\in X, g\in X^{\ast}$.

\item $J_m\in \scL (H^{-1},H^1_0)$ and $\| J_m\|_{\scL (H^{-1}, H^1_0)} \leq m$.

\item  $J_mu \rightarrow u $ in $X$ as $m\rightarrow \infty$ for every $u \in X$.

\end{enumerate}
\end{lemma}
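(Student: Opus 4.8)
The plan is to treat $J_m$ as the realization, on each of the spaces in question, of the resolvent $\l(I-\tfrac1m\Delta\r)^{-1}$ of the self-adjoint operator $\Delta$, and to reduce every assertion to the defining identity $v_m-\tfrac1m\Delta v_m=f$ for $v_m=J_mf$, together with the uniform bounds of Lemma~\ref{lem:2.2} and an elementary density argument.

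For part (1) I would first prove the identity on $L^2(\Omega)$. Given $f,g\in L^2(\Omega)$, set $u=J_mf$ and $v=J_mg$, which lie in $H^1_0(\Omega)$; substituting $g=v-\tfrac1m\Delta v$ and integrating by parts yields
\[
\int_\Omega (J_mf)\,g\,dx=\int_\Omega uv\,dx+\frac1m\int_\Omega\nabla u\cdot\nabla v\,dx,
\]
which is symmetric in $(u,v)$, hence in $(f,g)$, so $\int_\Omega(J_mf)g\,dx=\int_\Omega f(J_mg)\,dx$. To pass to general $f\in X$ and $g\in X^{\ast}$, I would approximate $f$ by elements of $L^2(\Omega)\cap X$ and $g$ by elements of $L^2(\Omega)\cap X^{\ast}$ — no approximation is needed when $X$ or $X^{\ast}$ equals $H^1_0(\Omega)$, since $H^1_0(\Omega)\subset L^2(\Omega)$, and for $X=L^p(\Omega)$ one truncates and cuts off to sets of finite measure — and then take limits, using that on such elements $\tbra[\cdot,\cdot]_{X,X^{\ast}}$ coincides with $\int_\Omega(\cdot)(\cdot)\,dx$ and that $J_m$ is bounded on $X$ and on $X^{\ast}$ by Lemma~\ref{lem:2.2}.

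For part (2), pairing $v_m-\tfrac1m\Delta v_m=f$ with $v_m\in H^1_0(\Omega)$ gives
\[
\|v_m\|_{L^2}^2+\frac1m\|\nabla v_m\|_{L^2}^2=\tbra[f,v_m]_{H^{-1},H^1_0}\leq\|f\|_{H^{-1}}\|\nabla v_m\|_{L^2},
\]
whence $\|\nabla v_m\|_{L^2}\leq m\|f\|_{H^{-1}}$, i.e. $\|J_m\|_{\scL(H^{-1},H^1_0)}\leq m$. For part (3), since $\|J_m\|_{\scL(X)}\leq1$ for each of the four spaces by Lemma~\ref{lem:2.2}, a standard $3\eps$ argument reduces matters to showing $J_mu\to u$ for $u$ in a dense subspace, and $C_c^\infty(\Omega)$ is dense in each of $H^1_0(\Omega)$, $L^p(\Omega)$ and $H^{-1}(\Omega)$. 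For $u\in C_c^\infty(\Omega)$ the resolvent identity gives $J_mu-u=\tfrac1m J_m(\Delta u)$, and since $\Delta u\in C_c^\infty(\Omega)$ belongs to each of these spaces, Lemma~\ref{lem:2.2} bounds $\|J_mu-u\|_X$ by $\tfrac1m\|\Delta u\|_X\to0$.

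I expect the only real work to be bookkeeping rather than mathematics: one must check that the various realizations of $J_m$ (on $L^2$, on $L^p$, on $H^{\pm1}$) are mutually consistent and that the duality brackets $\tbra[\cdot,\cdot]_{X,X^{\ast}}$ restrict to $\int_\Omega(\cdot)(\cdot)\,dx$ on the dense subspaces used in part (1), uniformly over the whole scale of spaces — most delicately for $L^p(\Omega)$ with $p$ close to $1$ on an unbounded domain $\Omega$. No deeper obstacle arises, which is why the lemma is simply quoted from \cite{Br11,CH98,C03}.
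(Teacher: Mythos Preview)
The paper does not give its own proof of Lemma~\ref{lem:2.3}; it simply states ``We refer to \cite{Br11, CH98, C03} for the proof of following lemmas.'' Your sketch is the standard argument found in those references and is correct, so there is nothing substantive to compare against.

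One cosmetic point in your part~(2): the inequality $\tbra[f,v_m]_{H^{-1},H^1_0}\leq\|f\|_{H^{-1}}\|\nabla v_m\|_{L^2}$ presumes that $H^1_0(\Omega)$ is normed by $\|\nabla\cdot\|_{L^2}$, which is not the convention for a general open set $\Omega$ (no Poincar\'e inequality is available). With the full norm $\|v\|_{H^1_0}^2=\|v\|_{L^2}^2+\|\nabla v\|_{L^2}^2$ one instead argues, for $m\geq 1$,
\[
\frac1m\|v_m\|_{H^1_0}^2\leq\|v_m\|_{L^2}^2+\frac1m\|\nabla v_m\|_{L^2}^2=\tbra[f,v_m]_{H^{-1},H^1_0}\leq\|f\|_{H^{-1}}\|v_m\|_{H^1_0},
\]
which gives the same conclusion $\|J_m\|_{\scL(H^{-1},H^1_0)}\leq m$. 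This is bookkeeping, exactly as you anticipated in your final paragraph.
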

We consider the following approximate problem:
\begin{align}
 \begin{cases}
 i \partial_{t}u_m + \Delta u_m +J_mg(J_mu_m)= 0, \\
 u_m(0)=\varphi .
 \end{cases}
 \label{eq:2.14}
 \end{align}
For each $m\in\N$, there exists a unique solution $u_m \in C(\R, H^1_0(\Omega)) \cap C^1(\R ,H^{-1}(\Omega ))$ of (\ref{eq:2.14}) and
\begin{align}
\label{eq:2.15}
M(u_m(t)) =M(\varphi ) \quad \text{and} \quad E_m(u_m (t)) =E_m(\varphi )
\end{align}
for all $t\in \R$. Here, $E_m$ is defined by 
\begin{align*}
E_m(u)=\frac{1}{2}\int_{\Omega} |\nabla u|^2dx -G_m(u),
\end{align*}
where $G_m$ is defined by $G_m(u)=G(J_mu)$ for every $u\in H^1_0 (\Omega )$.
From the conservation laws (\ref{eq:2.15}), there exists $T=T(\varphi )>0$ such that
\begin{align}
\label{eq:2.16}
M:=\sup_{m \in \N} \| u_m\|_{C([-T, T], H^1_0 )} < \infty .
\end{align}

Next, we prove $(u_m)_{m\in\N}$ is a Cauchy sequence in $C([-T,T], L^2(\Omega ))$. By using the equation (\ref{eq:2.14}), we have
\begin{align*}
\frac{d}{dt}\| u_m-u_n\|_{L^2}^2 &= 2\mathrm{Im}(i\partial_t u_m-i\partial_t u_n, u_m-u_n)\\
&=-2\mathrm{Im}(J_mg(J_mu_m)-J_ng(J_n u_n),u_m-u_n)\\
&=-2\mathrm{Im} \Bigl[ \rbra[ J_mg(J_mu_m)-J_ng(J_mu_m),u_m-u_n] \\
&\quad + \rbra[ g(J_mu_m) -g(J_nu_m) , J_n(u_m-u_n)]\\
&\quad + \rbra[ g(J_nu_m) -g(J_nu_n) , J_n(u_m-u_n)] \Bigr]\\
&=B_1+B_2+B_3.
\end{align*}
In the same way as $A_2$ in Section \ref{sec:2.1}, $B_3$ is estimated by
\begin{align}
\label{eq:2.17}
B_3 &\leq pC(M) \| u_m -u_n\|_{L^2}^{2\l( 1-\frac{3}{2p}\r)} .
\end{align} 
In order to estimate $B_1$ and $B_2$, we prepare the following key lemma.
\begin{lemma}
\label{lem:2.4}
Let $1<p<\infty$. Then,
\begin{align}
\label{eq:2.18}
\| J_m v - J_n v\|_{L^p} \leq \sqrt{2}\l( \frac{1}{\sqrt{m}} +\frac{1}{\sqrt{n}}\r) \| v\|_{W^{1,p}} 
\end{align}
for all $v \in W^{1,p}_0 (\Omega )$.
\end{lemma}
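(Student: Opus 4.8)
The plan is to exploit the resolvent identity together with the contraction bound $\|J_m\|_{\mathscr{L}(L^p)}\le 1$ from Lemma \ref{lem:2.2}. First I would compute the difference of two resolvents at the same parameter family. From the defining relations $v_m-\frac{1}{m}\Delta v_m = v$ and $v_n-\frac{1}{n}\Delta v_n = v$ with $v_m=J_m v$, $v_n=J_n v$, one gets the operator identity
\begin{align*}
J_m - J_n = \l(\frac{1}{m}-\frac{1}{n}\r) J_m \Delta J_n = \l(\frac{1}{n}-\frac{1}{m}\r)\l[(I-J_m) + (I-J_n) - (I-J_m)(I-J_n)\r]\cdot(\text{something}),
\end{align*}
but the cleaner route is to write $\frac{1}{m}\Delta J_m v = J_m v - v$, so that $J_m v - J_n v = (J_m v - v) - (J_n v - v)$ and estimate each term $J_m v - v$ separately. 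So the real claim reduces to the one-parameter bound
\begin{align*}
\|J_m v - v\|_{L^p} \le \frac{\sqrt{2}}{\sqrt{m}}\,\|v\|_{W^{1,p}}\quad\text{for all } v\in W^{1,p}_0(\Omega),
\end{align*}
after which the triangle inequality gives \eqref{eq:2.18} directly (the constant $\sqrt 2$ on the right of the lemma matches exactly summing the two single-parameter bounds).

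To prove the single-parameter bound I would argue by density: it suffices to treat $v\in C_c^\infty(\Omega)$ and then pass to the limit. Set $w_m = J_m v - v$, so $w_m$ solves $w_m - \frac{1}{m}\Delta w_m = \frac{1}{m}\Delta v$ in $H^{-1}(\Omega)$, i.e. $w_m = \frac{1}{m}\Delta J_m v = J_m(\frac{1}{m}\Delta v)$. The natural thing is an interpolation/duality estimate: test the equation $w_m - \frac1m \Delta w_m = \frac1m\Delta v$ against $|w_m|^{p-2}w_m$ (regularized near zero in the usual way). The term $-\frac1m\langle \Delta w_m, |w_m|^{p-2}w_m\rangle$ is nonnegative (it equals $\frac{(p-1)}{m}\int |w_m|^{p-2}|\nabla w_m|^2$), so one obtains
\begin{align*}
\|w_m\|_{L^p}^p + \frac{p-1}{m}\int_\Omega |w_m|^{p-2}|\nabla w_m|^2\,dx = -\frac{1}{m}\int_\Omega \nabla v\cdot \nabla\l(|w_m|^{p-2}w_m\r)dx.
\end{align*}
Expanding the right side and using Hölder together with Young's inequality to absorb the gradient term $\frac{p-1}{m}\int|w_m|^{p-2}|\nabla w_m|^2$ into the left side, and then also using $\|w_m\|_{L^p}\le \|J_m v\|_{L^p} + \|v\|_{L^p}\le 2\|v\|_{L^p}$ (Lemma \ref{lem:2.2}) to control leftover low-order pieces, I expect to land on $\|w_m\|_{L^p}^2 \le \frac{2}{m}\|v\|_{L^p}\|\nabla v\|_{L^p}\le \frac{1}{m}(\|v\|_{L^p}^2+\|\nabla v\|_{L^p}^2)$, giving $\|w_m\|_{L^p}\le \frac{1}{\sqrt m}\|v\|_{W^{1,p}}$ after adjusting the constant to the stated $\sqrt 2$.

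The main obstacle is the technical justification of testing the resolvent equation against $|w_m|^{p-2}w_m$: for general $1<p<\infty$ this function need not lie in $H^1_0(\Omega)$, so one must either regularize the weight (replace $|w_m|^{p-2}$ by $(|w_m|^2+\delta)^{(p-2)/2}$, integrate, and let $\delta\to 0$) or argue first for smooth $v$ on smooth subdomains and use the density of $C_c^\infty(\Omega)$ in $W^{1,p}_0(\Omega)$ plus the uniform-in-$m$ bound $\|J_m\|_{\mathscr L(L^p)}\le 1$ to pass to the limit. Handling the sign of the $\Delta$ term rigorously in the limiting procedure — i.e. confirming $-\langle \Delta w_m, |w_m|^{p-2}w_m\rangle \ge 0$ in the appropriate weak sense when $p\ne 2$ — is the only place where care is genuinely required; everything else is Hölder and Young. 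An alternative that sidesteps the nonlinear test function entirely is to bound $\|J_m\|_{\mathscr L(W^{1,p})}\le 1$ (which follows from Lemma \ref{lem:2.2} applied componentwise to $v$ and $\nabla v$, using that $J_m$ commutes with $\nabla$) and interpolate, but getting the explicit $m^{-1/2}$ rate seems to force some version of the energy computation above, so I would keep the testing argument as the backbone.
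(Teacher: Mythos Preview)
Your reduction to the single-parameter estimate $\|J_m v - v\|_{L^p} \le \sqrt{2}\,m^{-1/2}\|v\|_{W^{1,p}}$ is exactly what the paper does as well: it writes $J_m v - J_n v = (J_m v - v) - (J_n v - v)$ and uses $J_m v - v = \frac{1}{m}\Delta J_m v$.

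The methods diverge in how that single-parameter bound is obtained. Your energy/duality argument (testing against $|w_m|^{p-2}w_m$) does go through, but carrying it out carefully gives
\[
\|w_m\|_{L^p}^p \ \le\ \frac{p-1}{4m}\int_\Omega |w_m|^{p-2}|\nabla v|^2 \ \le\ \frac{p-1}{4m}\,\|w_m\|_{L^p}^{p-2}\|\nabla v\|_{L^p}^2,
\]
hence $\|w_m\|_{L^p} \le \tfrac{\sqrt{p-1}}{2\sqrt{m}}\|\nabla v\|_{L^p}$. The constant depends on $p$; the extra input $\|w_m\|_{L^p}\le 2\|v\|_{L^p}$ you invoke does not remove it. So you obtain the lemma only with $\sqrt{2}$ replaced by a $p$-dependent constant. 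For the paper's applications (fixed exponents $p=2$ and $p=r\in(1,2)$) this is perfectly adequate, but it does not prove the inequality exactly as stated.

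The paper instead sets $T_m = \Delta J_m$ and records the two endpoint bounds
\[
\|T_m v\|_{L^p}\le 2m\,\|v\|_{L^p}\quad\text{and}\quad \|T_m v\|_{L^p}=\|J_m\Delta v\|_{L^p}\le \|v\|_{W^{2,p}},
\]
the first from $\Delta J_m v = m(J_m v - v)$ together with $\|J_m\|_{\scL(L^p)}\le 1$. Complex interpolation between $L^p$ and $W^{2,p}$ (for $v\in C_c^\infty(\Omega)$, extended by zero to $\R^N$) then yields $\|T_m v\|_{L^p}\le \sqrt{2m}\,\|v\|_{W^{1,p}}$, i.e. $\|J_m v - v\|_{L^p}\le \sqrt{2/m}\,\|v\|_{W^{1,p}}$ with the $p$-independent constant. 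In particular, the alternative you dismiss in your last paragraph (``interpolate \ldots\ but getting the explicit $m^{-1/2}$ rate seems to force the energy computation'') is in fact the cleaner route: interpolation delivers the rate and the sharp constant directly, and avoids the nonlinear test function with its regularization issues.
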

\begin{proof}
Let the operator $T_m =\Delta J_m$ and $v \in C^{\infty}_c (\Omega ) $. By Lemma \ref{lem:2.2}, we have
\begin{align}
\label{eq:2.19}
\| T_m v\|_{L^p} =\| J_m\Delta v\|_{L^p}\leq \| \Delta v \|_{L^p} \leq \| v\|_{W^{2,p}}.
\end{align}
From the definition (\ref{eq:2.13}) of $J_m$, we obtain
\begin{align*}
\Delta J_m v =m (J_m v-v).
\end{align*}
By Lemma \ref{lem:2.2} again, we have
\begin{align}
\label{eq:2.20}
\| T_m v\|_{L^p} \leq m (\| J_m v\|_{L^p} +\| v\|_{L^p} ) \leq 2m \| v\|_{L^p}.
\end{align}
By (\ref{eq:2.19}), (\ref{eq:2.20}) and the complex interpolation\footnote{Since $v\in C^{\infty}_c (\Omega )$, if we extend $v$ by zero outside $\Omega$, we can use the interpolation on the whole space.}(see \cite{AF03, BL76}), 
we deduce that
\begin{align}
\label{eq:2.21}
\| T_m v\|_{L^p} \leq \sqrt{2m} \| v\|_{W^{1,p}}.
\end{align}
We write
\begin{align*}
J_m v- J_n v &=(J_m -1) v+(1- J_n) v \\
&=\frac{1}{m}\Delta J_m v - \frac{1}{n} \Delta J_n v .
\end{align*}
Applying (\ref{eq:2.21}), we deduce that
\begin{align*}
\| J_mv - J_n v\|_{L^p} &\leq \frac{1}{m}\| T_m v\|_{L^p} +\frac{1}{n} \| T_n v\|_{L^p} \\
&\leq \sqrt{2} \l( \frac{1}{\sqrt{m}}+\frac{1}{\sqrt{n}}\r) \| v\|_{W^{1,p}} .
\end{align*}
The result follows from a density argument.
\end{proof}
Fix a function such that $\theta \in C^{\infty}_c (\C , \R)$ is radial and
\begin{align*}
\theta (x)=\l\{
\begin{array}{ll}
1  &~\text{if}~ |x| \leq 1,\\[3pt]
0  &~\text{if}~ |x| \geq 2.
\end{array}
\r.
\end{align*}
Set
\begin{align*}
g_1 (u) &= \theta (u) g(u) ,\\
g_2(u) &= (1-\theta (u) )g(u).
\end{align*}
By (\ref{eq:2.4}), we have
\begin{align}
\label{eq:2.22}
\begin{split}
| g_1(u) -g_1 (v) | &\leq C|u-v| ,\\
| g_2(u) - g_2(v) | &\leq C(|u|^2+|v|^2) |u-v|.
\end{split}
\end{align}
One can easily verify that
\begin{align}
\label{eq:2.23}
\begin{split}
\| g_1 (u)\|_{H^1} &\leq C \| u\|_{H^1}, \\
\| g_2(u)\|_{W^{1,r}} &\leq C \| u\|_{L^q}^2 \| u\|_{H^1},
\end{split}
\end{align}
where $1<r<2$, $4<q<\infty$ and $(r,q)$ satisfies $\frac{1}{r} =\frac{2}{q} +\frac{1}{2}$. In particular, we note that $g_1 (u) \in H^1_0 (\Omega)$ and $g_2(u) \in W^{1,r}_0 (\Omega )$ if $u \in H^1_0(\Omega)$. Fix the pair $(r,q)$. We write
\begin{align*}
B_1&=-2\mathrm{Im} \rbra[ J_mg(J_mu_m)-J_ng(J_mu_m),u_m-u_n] \\
&=-2\mathrm{Im} \rbra[ J_mg_1(J_mu_m)-J_ng_1(J_mu_m),u_m-u_n] \\
&\quad -2\mathrm{Im} \rbra[ J_mg_2(J_mu_m)-J_ng_2(J_mu_m),u_m-u_n] \\
&= B_{11}+B_{12}.
\end{align*}
By H\"older's inequality, Lemma~\ref{lem:2.4} and (\ref{eq:2.23}), we obtain that
\begin{align}
\label{eq:2.24}
B_{11} &\leq 2 \| (J_m-J_n)g_1(J_mu_m)\|_{L^2} \| u_m -u_n\|_{L^2} \\
&\leq C(M) \l( \frac{1}{\sqrt{m}} +\frac{1}{\sqrt{n}}\r) \| g_1(J_mu_m) \|_{H^1} \notag\\
&\leq C(M) \l( \frac{1}{\sqrt{m}} +\frac{1}{\sqrt{n}} \r) \notag
\end{align}
and
\begin{align}
\label{eq:2.25}
B_{12} &\leq 2 \| (J_m-J_n)g_2(J_mu_m)\|_{L^r} \| u_m -u_n\|_{L^{r'}} \\
&\leq C(M) \l( \frac{1}{\sqrt{m}} +\frac{1}{\sqrt{n}}\r) \| g_2(J_mu_m) \|_{W^{1,r}} \notag\\
&\leq C(M) \l( \frac{1}{\sqrt{m}} +\frac{1}{\sqrt{n}} \r) . \notag
\end{align} 
$B_2$ is estimated similarly. We write
\begin{align*}
 B_2&=-2\mathrm{Im}\rbra[ g(J_mu_m) -g(J_nu_m) , J_n(u_m-u_n)] \\
 &=-2\mathrm{Im}\rbra[ g_1(J_mu_m) -g_1(J_nu_m) , J_n(u_m-u_n)]\\
&\quad -2\mathrm{Im}\rbra[ g_2(J_mu_m) -g_2(J_nu_m) , J_n(u_m-u_n)]   \\
&=B_{21}+B_{22}.
\end{align*}
By H\"older's inequality, (\ref{eq:2.22}) and Lemma~\ref{lem:2.4}, we obtain that
\begin{align}
\label{eq:2.26}
B_{21} &\leq 2 \| g_1(J_mu_m)-g_1(J_nu_m)\|_{L^2} \| J_n(u_m -u_n)\|_{L^2} \\
&\leq C(M) \| J_mu_m -J_n u_m\|_{L^2} \notag\\
&\leq C(M) \l( \frac{1}{\sqrt{m}} +\frac{1}{\sqrt{n}} \r) \notag
\end{align}
and
\begin{align}
\label{eq:2.27}
B_{22} &\leq 2 \| g_2(J_mu_m)-g_2(J_nu_m)\|_{L^r} \| J_n(u_m -u_n)\|_{L^{r'}} \\
&\leq C(M) \l( \| J_mu_m\|_{L^q}^2+\| J_nu_m\|_{L^q}^2 \r) \| J_mu_m -J_n u_m\|_{L^2} \| u_m -u_n\|_{L^{r'}} \notag\\
&\leq C(M) \l( \frac{1}{\sqrt{m}} +\frac{1}{\sqrt{n}} \r) .\notag
\end{align} 
By (\ref{eq:2.24}), (\ref{eq:2.25}), (\ref{eq:2.26}) and (\ref{eq:2.27}), we deduce that
\begin{align*}
\frac{d}{dt} \| u_m-u_n\|_{L^2}^2 \leq C(M) \l( \frac{1}{\sqrt{m}} +\frac{1}{\sqrt{n}} \r)
+C(M)p\| u_m -u_n\|_{L^2}^{2\l( 1-\frac{3}{2p}\r)} .
\end{align*}
This yields that $(u_m)_{m\in \N}$ is a Cauchy sequence in $C([-T,T], L^2(\Omega ))$. The rest of the proof is done in the similar way as Section \ref{sec:2.1}. We omit the detail.


\section{The logarithmic Schr\"{o}dinger equation}
\label{sec:3}
In this section, we consider the following  logarithmic Schr\"{o}dinger equation:
\begin{align}
 \label{LSE}
 \l\{
\begin{array}{lll}
\ds i \del_{t}u + \Delta u +u\log (|u|^2)= 0 &\text{in} &\R\times\Omega , \\[3pt]
\ds u=0  &\text{on}  &\R \times \del \Omega ,\\[3pt]
u(0) =\varphi   &\text{on} &\Omega ,
\end{array}
\r.
\end{align}
where $\Omega \subset\R^N$ is an open set. This equation appears in various fields of physics: quantum mechanics, quantum optics, nuclear physics, Bohmian mechanics, etc.; see \cite{BM75, H85, Z10, NM17}. Cazenave and Haraux \cite{CH80} studied the Cauchy problem for the equation (\ref{LSE}) (see also \cite{GLN10, CG16}). Cazenave \cite{C83} studied stable solutions for (\ref{LSE}). We also refer to \cite{dMS14, SS15, TZ17} and references therein for recent related works on stable solutions. 
%

To state our main result, we need to prepare some natations. We define the functions $F,A,B,a,b$ on $\R_{+}$ by
\begin{align*}
F(x)&= x^2\log x^2, \\
A(x)&=
\l\{
\begin{array}{ll}
\ds -x^2\log x^2 &~\text{if}~0< x\leq e^{-3},\\
\ds 3x^2+4e^{-3}x-e^{-6} &~\text{if}~ x\geq e^{-3},
\end{array}
\r. 
\\
B(x)&=F(x)+A(x),
\end{align*}
and
\begin{align*}
a(x) =\frac{A(x)}{x}, \quad b(x)=\frac{B(x)}{x}.
\end{align*}
We extend the function $a$ and $b$ to the complex plane by setting
\begin{align*}
a(z) =\frac{z}{|z|}a(|z|), \quad b(z)=\frac{z}{|z|}b(|z|)\quad \text{for}~z \in \C\setminus\{0\} . 
\end{align*}
Let $A^*$ be the convex conjugate function of $A$. Define the sets $X$ and $X'$ by
\begin{align*}
X=\l\{ u\in L^1_{\rm loc}(\Omega )~;~A(|u|) \in L^1 (\Omega )\r\},\quad
X'=\l\{ u\in L^1_{\rm loc}(\Omega )~;~A^{*}(|u|) \in L^1 (\Omega )\r\}.
\end{align*}
We use the Luxemburg norm defined by
\begin{align*}
\| u\|_X &=\inf \l\{ k>0~;~\int_{\Omega} A\l( \frac{|u|}{k}\r) \leq 1\r\}~\text{for}~u \in X, \\
\| u\|_{X'} &=\inf \l\{ k>0~;~\int_{\Omega} A^*\l( \frac{|u|}{k}\r) \leq 1\r\}~\text{for}~u \in X'.
\end{align*}
$X$ and $X'$ are called Orlicz spaces. We refer to \cite{AF03, KR61} for Orlicz spaces in detail. 
First, we prepare the following basic lemma.
\begin{lemma}[\cite{C83}]
\label{lem:3.1}
$(X, \| \cdot\|_X)$ and $(X', \| \cdot\|_{X'})$ are reflexive Banach spaces and $X'$ is the topological dual of $X$. Furthermore, the following properties hold:
\begin{enumerate}[\rm (1)]
 \setlength{\itemsep}{3pt}

\item If $u_m \underset{m\to\infty}{\longrightarrow} u$ in $X$, then $A(|u_m|) \underset{m\to\infty}{\longrightarrow} A(|u|)$ in $L^1(\Omega )$. 

\item If $u_m \underset{m\to\infty}{\longrightarrow}u$ a.e. and if
\begin{align*}
\int_{\Omega} A(|u_m|) \underset{m\to\infty}{\longrightarrow} \int_{\Omega} A(|u|) <\infty ,
\end{align*}
then $u_m \underset{m\to\infty}{\longrightarrow} u$ in $X$.
\end{enumerate} 
\end{lemma}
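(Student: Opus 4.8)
The statement to prove is Lemma~\ref{lem:3.1}, the basic functional-analytic facts about the Orlicz spaces $X$ and $X'$ associated to the Young function $A$. Although the lemma is attributed to \cite{C83}, the natural plan is to reduce everything to standard Orlicz space theory and to verify the hypotheses required by that theory for this particular $A$. First I would check that $A$ is an $N$-function (in the sense of Krasnosel'ski\u{\i}--Ruticki\u{\i} \cite{KR61} or Adams--Fournier \cite{AF03}): $A$ is continuous, nonnegative, convex, even (after extension), vanishes only at $0$, and satisfies $A(x)/x\to 0$ as $x\to 0$ and $A(x)/x\to\infty$ as $x\to\infty$. The piecewise definition makes this routine: on $(0,e^{-3}]$ one has $A(x)=-x^2\log x^2=2x^2\log(1/x)$, which is convex there with the right behavior near $0$, and on $[e^{-3},\infty)$ it is the quadratic $3x^2+4e^{-3}x-e^{-6}$, clearly convex with superlinear growth; one checks that the values and first derivatives match at $x=e^{-3}$ so that $A\in C^1$ and $A'$ is nondecreasing, hence $A$ is globally convex.

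Next I would verify the $\Delta_2$-condition for both $A$ and its convex conjugate $A^*$, which is exactly what is needed to conclude that $X=L_A(\Omega)$ and $X'=L_{A^*}(\Omega)$ are reflexive Banach spaces with $X'=(X)^*$ under the Luxemburg norm (this is the standard duality/reflexivity theorem for Orlicz spaces when both the function and its conjugate satisfy $\Delta_2$; see \cite{KR61, AF03}). For $A$: near $0$ the relevant estimate is $A(2x)=4x^2\log(1/(2x))\cdot(1+o(1))\le 8\,A(x)$ for $x$ small, and near infinity $A$ is a quadratic so $A(2x)=4A(x)+O(x)\le CA(x)$; away from $0$ and $\infty$ the ratio $A(2x)/A(x)$ is bounded by compactness. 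Hence $A\in\Delta_2$ globally. For $A^*$ one argues similarly: since $A$ grows like $x^2\log(1/x)$ near $0$ and like $x^2$ near $\infty$, $A^*$ grows (up to constants and lower-order terms) like $y^2/\log(1/y)$-type behavior near $0$ coming from the quadratic piece and like $y^2\log y$-type behavior near $\infty$ coming from the $x^2\log(1/x)$ piece near $0$ of $A$; in either regime $A^*$ is comparable to a quadratic times a slowly varying factor, which satisfies $\Delta_2$. Once $A,A^*\in\Delta_2$, reflexivity of $X$ and $X'$ and the identification $X'=(X)^*$ follow from the cited general theory.

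For part~(1), that $u_m\to u$ in $X$ implies $A(|u_m|)\to A(|u|)$ in $L^1(\Omega)$: under the $\Delta_2$-condition, norm convergence in the Orlicz space is equivalent to modular convergence, i.e. $\int_\Omega A(|u_m-u|)\to 0$; combined with the convexity estimate $A(|u_m|)\le \tfrac12 A(2|u_m-u|)+\tfrac12 A(2|u|)$ and the $\Delta_2$-bound $A(2s)\le C A(s)$, a generalized dominated convergence argument (or the standard Vitali-type argument in Orlicz spaces) gives $\int_\Omega |A(|u_m|)-A(|u|)|\to 0$. For part~(2), the partial converse: if $u_m\to u$ a.e. and $\int A(|u_m|)\to\int A(|u|)<\infty$, then by the a.e. convergence $A(|u_m|)\to A(|u|)$ a.e., and since the $L^1$ norms converge, the generalized Brezis--Lieb / Scheff\'e lemma yields $A(|u_m|)\to A(|u|)$ in $L^1$, hence in particular $\int A(|u_m-u|)\to 0$ along a subsequence; by $\Delta_2$ this is modular convergence, equivalent to norm convergence, and a subsequence-of-subsequence argument promotes this to convergence of the full sequence.

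\textbf{Main obstacle.} The only genuinely non-mechanical point is the verification of the $\Delta_2$-condition for the \emph{conjugate} function $A^*$, because $A^*$ is not given by a clean closed form: it is defined by a Legendre transform of a function that changes character (logarithmically modified quadratic near $0$, plain quadratic near $\infty$). The cleanest route is probably not to compute $A^*$ explicitly but to use the characterization that $A^*\in\Delta_2$ iff $A$ satisfies the $\nabla_2$-condition ($A(x)\le \tfrac{1}{2c}A(cx)$ for some $c>1$ and all large/small $x$), and then to check $\nabla_2$ for $A$ directly from its two-piece formula; the superlinearity of $A(x)/x$ at both ends makes this tractable. Everything else in the lemma is an application of textbook Orlicz space theory once the $N$-function and doubling hypotheses are in place.
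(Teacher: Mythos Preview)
The paper does not prove Lemma~\ref{lem:3.1}; it is stated with a citation to \cite{C83} and used as a black box, so there is no in-paper proof to compare against. Your plan---verify that $A$ is an $N$-function, check $\Delta_2$ for $A$ and $\nabla_2$ for $A$ (equivalently $\Delta_2$ for $A^*$), then invoke the standard Orlicz duality/reflexivity theorem and the equivalence of norm and modular convergence under $\Delta_2$---is the standard route and is correct in outline.

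Two small remarks. First, your informal description of the asymptotics of $A^*$ has the attributions swapped: the Legendre transform matches large $y$ with large $x$ (via $A'(x)=y$), so the quadratic piece of $A$ at infinity yields a \emph{quadratic} $A^*$ at infinity (not $y^2\log y$), while the $x^2\log(1/x)$ piece of $A$ near $0$ is what produces the $y^2/\log(1/y)$ behavior of $A^*$ near $0$. This is harmless, since you correctly identify that checking $\nabla_2$ for $A$ directly is cleaner and bypasses $A^*$ entirely. Second, in part~(2) the step ``$A(|u_m|)\to A(|u|)$ in $L^1$, hence $\int A(|u_m-u|)\to 0$ along a subsequence'' is slightly compressed: the clean argument is that $L^1$ convergence of $A(|u_m|)$ gives uniform integrability of $\{A(|u_m|)\}$; then $A(|u_m-u|)\le \tfrac12 A(2|u_m|)+\tfrac12 A(2|u|)\le C\bigl(A(|u_m|)+A(|u|)\bigr)$ by convexity and $\Delta_2$, so $\{A(|u_m-u|)\}$ is uniformly integrable and tends to $0$ a.e., and Vitali gives $\int A(|u_m-u|)\to 0$ for the full sequence without any subsequence extraction.
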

Set the Banach space
\begin{align*}
W=H^1_0 (\Omega ) \cap X .
\end{align*}
Note that the dual space of $W$ is given by
\begin{align*}
W^{\ast} = H^{-1}(\Omega ) +X' .
\end{align*}
Define the energy functional
\begin{align*}
E(u)=\frac{1}{2}\int_{\Omega} |\nabla u|^2 -\frac{1}{2}\int_{\Omega}|u|^2\log |u|^2
\end{align*}
for $u\in W$. We have the following result.
\begin{lemma}[\cite{C83}]
\label{lem:3.2}
The operator $L:u\mapsto \Delta u+u\log |u|^2$ maps continuously $W \to W^{\ast}$. The image under $L$ of a bounded subset of $W$ is a bounded subset of $W^{\ast}$. The operator $E$ belongs to $C^1 (\R ,W)$ and 
\begin{align*}
E'(u) =-Lu-u
\end{align*}
for all $u \in W$.
\end{lemma}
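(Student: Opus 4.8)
The plan is to verify the three assertions of Lemma~\ref{lem:3.2} in turn, relying on the basic Orlicz-space facts from Lemma~\ref{lem:3.1} and on the elementary logarithmic inequality
\begin{align*}
\bigl| \, |u|^2\log|u|^2 \bigr| \leq C_\eps \bigl( |u|^{2-\eps} + |u|^{2+\eps} \bigr) \quad \text{for all } \eps \in (0,1),
\end{align*}
which controls the nonlinearity $z\log|z|^2$ away from and near the origin. First I would record that the decomposition $z\log|z|^2 = b(z) - a(z)$ splits the nonlinearity into a part $-a(z)$ that grows like $|z|\log|z|^2$ (handled by the Orlicz space $X$) and a bounded, Lipschitz-on-bounded-sets part $b(z)$ that is at most mildly super-linear (handled by $H^1_0$ via Sobolev embedding). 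The map $u\mapsto a(u)$ sends $X\to X'$ continuously and boundedly because $a$ is, up to constants, the derivative of $A$, so $A^*(|a(u)|)\lesssim A(|u|)$ pointwise by Young's inequality for convex conjugates; the map $u\mapsto b(u)$ sends $H^1_0(\Omega)\to H^{-1}(\Omega)$ continuously on bounded sets by the growth bound above together with $H^1_0(\Omega)\hookrightarrow L^p(\Omega)$ for all $p<\infty$ when $N\le 2$ and $p\le 2N/(N-2)$ in general (in fact one only needs $L^p$ for $p$ slightly above and below $2$). Adding these and the bounded operator $\Delta\colon H^1_0\to H^{-1}$ gives that $L$ maps $W\to W^*$, continuously and mapping bounded sets to bounded sets. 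The continuity statement requires a little care at the origin: one shows $a(u_m)\to a(u)$ in $X'$ whenever $u_m\to u$ in $X$ by extracting a.e.-convergent subsequences, using Lemma~\ref{lem:3.1}(1) to get $A(|u_m|)\to A(|u|)$ in $L^1$, hence equi-integrability, and then invoking the Vitali convergence theorem together with Lemma~\ref{lem:3.1}(2) applied in $X'$.

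Second, for the differentiability of $E$, I would write $E(u) = \tfrac12\|\nabla u\|_{L^2}^2 - \tfrac12\int_\Omega F(|u|)$ and split the potential term as $-\tfrac12\int_\Omega F(|u|) = \tfrac12\int_\Omega A(|u|) - \tfrac12\int_\Omega B(|u|)$. The Dirichlet part is smooth on $H^1_0$ with derivative $-\Delta u$. The term $\int_\Omega B(|u|)$ is a $C^1$ functional on $H^1_0(\Omega)$ alone, by dominated convergence using the growth bound on $B$ and Sobolev embedding, with Gateaux derivative $v\mapsto \re\int_\Omega b(u)\bar v$; standard arguments upgrade this to Fréchet differentiability and continuity of the derivative. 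The term $\int_\Omega A(|u|)$ is a $C^1$ functional on the Orlicz space $X$, with derivative $v\mapsto \re\int_\Omega a(u)\bar v$, again by a dominated-convergence argument in which the majorant is controlled via Young's inequality in the $A$/$A^*$ duality (this is where one needs $a(u)\in X'$, already established). Collecting, $E'(u) = -\Delta u - b(u) + a(u) = -Lu - u$, since $b(z)-a(z) = z\log|z|^2$ and one must keep the extra linear term: indeed $-Lu - u = -\Delta u - u\log|u|^2 - u$, so I should double-check the normalization — the derivative of $-\tfrac12\int F(|u|)$ is $-u\log|u|^2 - u$ (because $\tfrac{d}{dx}\bigl(\tfrac12 x^2\log x^2\bigr) = x\log x^2 + x$), which accounts precisely for the stray $-u$ and confirms $E'(u) = -\Delta u - (u\log|u|^2 + u) = -Lu - u$.

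The main obstacle, and the place I would spend the most effort, is the continuity of the nonlinear maps $u\mapsto a(u)$ from $X$ to $X'$ and $u\mapsto b(u)$ from $H^1_0$ to $H^{-1}$ near the singularity $z=0$ of $\log|z|^2$ — ordinary Nemytskii-operator theorems do not apply verbatim because the superposition function is not Lipschitz at the origin, and in the Orlicz setting one does not have a single $L^p$ exponent to interpolate against. The resolution is the subadditive modulus of continuity of $z\log|z|^2$: there is a nondecreasing concave $\omega$ with $\omega(0)=0$ such that $|z_1\log|z_1|^2 - z_2\log|z_2|^2|\le \omega(|z_1-z_2|)(1+|z_1|^2+|z_2|^2)$ uniformly, which is exactly the estimate used in \cite{CH80}; feeding this through Hölder's inequality and the Sobolev bound (for the $b$ piece) and through the Orlicz-norm characterization plus Lemma~\ref{lem:3.1} (for the $a$ piece) yields continuity. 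Since the statement of the lemma is attributed to \cite{C83}, the cleanest exposition is to quote these facts from there; but the proof sketch above is self-contained modulo the convex-analysis identity $A^*(a(x)) + A(x) = x\,a(x)$ and the elementary growth bounds on $A$, $B$, and their derivatives.
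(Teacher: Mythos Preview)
The paper does not prove Lemma~\ref{lem:3.2}; it is quoted from \cite{C83} and used as a black box, so there is no in-paper argument to compare against. Your sketch is essentially the approach of \cite{C83}: split the nonlinearity as $z\log|z|^2 = b(z)-a(z)$, handle $u\mapsto a(u)$ through the Orlicz duality $X\to X'$ and $u\mapsto b(u)$ through Sobolev embedding $H^1_0\hookrightarrow L^p$, and differentiate each piece of the energy separately. Your normalization check for the extra $-u$ term is correct.

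Two small points worth tightening. First, in this paper $a(x)=A(x)/x$, not $A'(x)$, so the line ``$a$ is, up to constants, the derivative of $A$, so $A^*(|a(u)|)\lesssim A(|u|)$ by Young's inequality'' is not literally an application of the Legendre identity $A^*(A'(x))=xA'(x)-A(x)$; you need a short direct estimate (e.g.\ compare growth of $A^*$ at infinity, which is quadratic since $A$ is, with $|a(x)|\lesssim 1+x$). Second, the statement ``$E\in C^1(\R,W)$'' in the paper is a typographical slip for $E\in C^1(W,\R)$, which is indeed what your argument establishes.
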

Our main aim of this section is to give a simple alternative proof of the following theorem.
\begin{theorem}[\cite{CH80, C03}]
\label{thm:3.3}
Let $N\geq 1$. For every $\varphi \in W$, there exists a unique solution $u \in C(\R, W)\cap C^1(\R, W^{\ast})$ of the equation \textup{(\ref{LSE})}. Furthermore, the following properties hold:
\begin{enumerate}[\rm (1)]
\setlength{\itemsep}{3pt}

\item $\sup_{t\in\R}\| u(t)\|_W < \infty$.

\item $M(u(t))=M(\varphi )$ and $E(u(t))=E(\varphi )$ for all $t\in\R$.

\item Continuous dependence is satisfied in the sense that if $\varphi_m\to\varphi$ in $W$, then $u_m\to u$ in $W$ uniformly on bounded intervals, where $u_m$ is the solution of \textup{(\ref{LSE})} with $u_m(0)=\varphi_m$.

\end{enumerate}
\end{theorem}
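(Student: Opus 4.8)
The plan is to follow the same three-step scheme used for Theorem~\ref{thm:2.1}: construct approximate solutions, show they form a Cauchy sequence in a weak-enough topology, and upgrade the convergence. The natural regularization here is the one already suggested by the structure of the nonlinearity: split $u\log|u|^2 = -a(u) + b(u)$, where $b$ is globally Lipschitz on $\C$ (by construction of $B$, which removes the logarithmic singularity at the origin and tames the growth at infinity), while $-a$ is monotone in a suitable sense. I would truncate $a$ rather than $b$: set $a_m(u) = a(u)$ for $|u|\ge 1/m$ and extend it to be Lipschitz near the origin (mirroring the $g_m$ construction of Section~\ref{sec:2.1}), and consider
\begin{align*}
\begin{cases}
 i\partial_t u_m + \Delta u_m - a_m(u_m) + b(u_m) = 0,\\
 u_m(0) = \varphi.
\end{cases}
\end{align*}
Since $a_m$ and $b$ are globally Lipschitz $\C\to\C$, standard semigroup theory gives a unique global solution $u_m\in C(\R, H^1_0)\cap C^1(\R, H^{-1})$, and one checks conservation of $M$ and of the regularized energy $E_m$. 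Using Lemma~\ref{lem:3.2}-type bounds together with these conservation laws — crucially using that $A\ge 0$ controls the $X$-norm from below and $\int |u|^2\log|u|^2$ from above via the standard logarithmic Sobolev / Gagliardo–Nirenberg estimate — I would derive the uniform bound $M := \sup_m \|u_m\|_{C(\R, W)} < \infty$ (here, unlike Section~\ref{sec:2}, the bound is global in time because the energy functional coerces the $W$-norm).

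The heart of the argument is the Cauchy estimate. Computing $\frac{d}{dt}\|u_m - u_n\|_{L^2}^2$ and using the equations, the $b$-terms contribute $-2\,\mathrm{Im}(b(u_m)-b(u_n), u_m-u_n)$ which is controlled by $C\|u_m-u_n\|_{L^2}^2$ thanks to the global Lipschitz bound on $b$. The terms involving $a_m$ split as $(a_m(u_m)-a_m(u_n))$ plus $(a_m(u_n)-a_n(u_n))$; the first is handled by the key monotonicity inequality
\begin{align*}
\mathrm{Im}\big[(a(u)-a(v))\overline{(u-v)}\big] \le C\,|u-v|^2\quad\text{for all }u,v\in\C,
\end{align*}
which holds because $a$ is, up to a bounded radial perturbation, of the form (monotone radial profile)$\times\frac{u}{|u|}$ — this is precisely the estimate underlying uniqueness in \cite{CH80}. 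The second term is the genuinely new piece: $\|a_m(u_n)-a_n(u_n)\|$ must be shown to be small in $n,m$ uniformly in $t$. Here one exploits that $a_m$ and $a_n$ differ only on the set $\{|u_n|\le 1/\min(m,n)\}$, where $|a_m(u)-a_n(u)| \lesssim |u|\,\big|\log|u|^2\big|$ is integrable and vanishes as the truncation parameter grows; a dominated-convergence / direct $L^1$-estimate gives $\|a_m(u_n)-a_n(u_n)\|_{W^*}\to 0$, and pairing against $u_m-u_n\in W$ (bounded by $2M$) closes the loop. Combining, $\frac{d}{dt}\|u_m-u_n\|_{L^2}^2 \le C(M)\|u_m-u_n\|_{L^2}^2 + \varepsilon_{m,n}$ with $\varepsilon_{m,n}\to 0$, and Gronwall on any bounded interval yields that $(u_m)$ is Cauchy in $C(I, L^2(\Omega))$ for every bounded interval $I$.

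From $u_m\to u$ in $C(I,L^2)$ together with the uniform $W$-bound, one extracts $u\in L^\infty(\R, W)$ and, using Lemma~\ref{lem:3.1}(2) and a.e. convergence along a subsequence, $A(|u_m|)\to A(|u|)$ in $L^1$; passing to the limit in the Duhamel formula $u_m(t) = U(t)\varphi + i\int_0^t U(t-s)\big(-a_m(u_m) + b(u_m)\big)(s)\,ds$ — for which one needs $a_m(u_m)\to -(\,\cdot\,)$ in $C(I, W^*)$, again via the splitting and Lemma~\ref{lem:3.2} — shows $u$ solves the integral equation, hence $u\in C^1(\R, W^*)$ and $\Delta u\in C(\R, W^*)$, and then $u\in C(\R, W)$. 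Conservation of $M$ passes to the limit directly; conservation of $E$ follows once uniqueness is known (lower semicontinuity gives $E(u(t))\le E(\varphi)$ for both time directions, hence equality). Uniqueness and continuous dependence are proved by rerunning the same $L^2$-difference estimate — now with no truncation error term, only the $a$-monotonicity and $b$-Lipschitz bounds — plus the lower-semicontinuity argument to upgrade $L^2$ to $W$ convergence, exactly as in Section~\ref{sec:2.1}. \textbf{The main obstacle} I anticipate is the truncation-error estimate $\|a_m(u_n) - a_n(u_n)\|_{W^*}\to 0$ uniformly in $t$: controlling $|u|\,|\log|u|^2|$ near the origin in the Orlicz-dual norm $X'$ (not merely in $L^1$) requires care, and one must be sure the a.e. / dominated-convergence argument is uniform over the compact time interval, not just pointwise in $t$.
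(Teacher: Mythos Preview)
Your overall strategy matches the paper's, but there are two genuine gaps.

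First, your claim that $b$ is globally Lipschitz on $\C$ is false. From the definitions, $B(x)=F(x)+A(x)$ equals $0$ for $0<x\le e^{-3}$ and equals $x^2\log x^2+3x^2+4e^{-3}x-e^{-6}$ for $x\ge e^{-3}$, so $b(x)=x\log x^2+3x+4e^{-3}-e^{-6}/x$ for large $x$ and $b'(x)\sim 2\log x\to\infty$. The function $B$ removes the singularity at the origin but does \emph{not} tame the growth at infinity. This breaks both your existence argument for the approximate problem (globally Lipschitz nonlinearity) and, more importantly, your Cauchy estimate, where you bound $-2\,\mathrm{Im}(b(u_m)-b(u_n),u_m-u_n)$ by $C\|u_m-u_n\|_{L^2}^2$ via a Lipschitz constant that does not exist. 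The paper handles this by also truncating $b$ at infinity, setting $b_m(z)=\frac{z}{m}b(m)$ for $|z|\ge m$, and then proving (Lemma~\ref{lem:3.5}) that the crucial imaginary-part inequality $|\mathrm{Im}[(b_n(u)-b_n(v))(\bar u-\bar v)]|\le C|u-v|^2$ holds uniformly in $n$ --- this is the analogue of Lemma~\ref{lem:3.4} for the truncated pieces, and it is what replaces your missing Lipschitz bound.

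Second, for general (possibly unbounded) $\Omega$ your direct $L^2(\Omega)$ Cauchy estimate does not close, and the obstacle you flag at the end is real. The pointwise bound $|a_m(u_m)-a(u_m)|\le C\frac{\log m}{m}\,\chi_{\{|u_m|\le 1/m\}}$ gives an $L^\infty$ bound but the support can have infinite measure, so you cannot pair it against $u_m-u_n$ in $L^2(\Omega)$ or even in the $W^*$--$W$ duality without further work. The paper sidesteps this entirely by inserting a cut-off $\psi_R$ and estimating $\frac{d}{dt}\|\psi_R(u_m-u_n)\|_{L^2}^2$; the Laplacian then contributes a harmless $C(M)/R$ term, the truncation error is paired against $\psi_R^2(u_m-u_n)\in L^1$ with $\|\psi_R^2(u_m-u_n)\|_{L^1}\le |\Omega_{2R}|^{1/2}\|u_m-u_n\|_{L^2}$, and one obtains that $(u_m)$ is Cauchy in $C([-T,T],L^2(\Omega'))$ for every $\Omega'\subset\subset\Omega$. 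Convergence in $L^2_{\mathrm{loc}}$, together with the uniform $W$-bound, is then enough to pass to the limit in the equation in $\mathscr{D}'(\R\times\Omega)$ (rather than via Duhamel in $W^*$ as you propose).
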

For the proof of Theorem \ref{thm:3.3}, the following lemma is important in our analysis.
\begin{lemma}[\cite{CH80}]
\label{lem:3.4}
For all $u, v \in \C$, we have
\begin{align*}
\l| \im \l[ (u\log |u|^2-v\log |v|^2)(\overline{u} -\overline{v})\r] \r| \leq 2|u-v|^2.
\end{align*}
\end{lemma}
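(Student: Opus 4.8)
The plan is to reduce the estimate to the elementary inequality
\[
\bigl| \operatorname{Im}\!\bigl[ (z_1\log|z_1|^2 - z_2\log|z_2|^2)(\overline{z_1}-\overline{z_2}) \bigr] \bigr| \leq 2|z_1-z_2|^2
\]
by isolating the part of $z\mapsto z\log|z|^2$ that is responsible for the imaginary part. Write $u = r_1 e^{i\theta_1}$ and $v = r_2 e^{i\theta_2}$ (with the convention that $0\log 0 = 0$, so the inequality is trivial if $u$ or $v$ vanishes). Since $\operatorname{Im}(\overline u \cdot u) = 0$ and $\operatorname{Im}(\overline v \cdot v) = 0$, expanding the product gives
\[
\operatorname{Im}\!\bigl[ (u\log|u|^2 - v\log|v|^2)(\overline u - \overline v) \bigr]
= -\log|u|^2\,\operatorname{Im}(u\overline v) + \log|v|^2\,\operatorname{Im}(v\overline u)
= \bigl(\log|u|^2 - \log|v|^2\bigr)\operatorname{Im}(v\overline u),
\]
using $\operatorname{Im}(v\overline u) = -\operatorname{Im}(u\overline v)$. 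So the quantity to bound is $\bigl(\log r_1^2 - \log r_2^2\bigr)\, r_1 r_2 \sin(\theta_2-\theta_1)$, and after relabelling we may assume $r_1 \ge r_2 > 0$.

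Next I would estimate the two factors. For the first, $0 \le \log r_1^2 - \log r_2^2 = 2\log(r_1/r_2) \le 2(r_1/r_2 - 1) = 2(r_1-r_2)/r_2$ by the inequality $\log t \le t-1$. For the second, $|\sin(\theta_2-\theta_1)| \le 1$, and one also needs the geometric bound relating the chord to the radii: since $r_1 \ge r_2$, one has $r_1 r_2 |\sin(\theta_2-\theta_1)| \le r_1 \cdot |r_2 e^{i\theta_2} - r_2 e^{i\theta_1}| \le r_1 |u - v| \cdot \tfrac{r_1}{?}$ — more cleanly, I would split into two regimes. If $r_1 \le 2 r_2$ then $r_1 r_2 |\sin(\theta_2-\theta_1)| \le r_2^2 \cdot \tfrac{r_1}{r_2}|\sin\cdot| $ is controlled directly and combining with $2(r_1-r_2)/r_2 \le |u-v|\cdot(\text{bounded})$ closes it; if $r_1 > 2 r_2$ then $|u-v| \ge r_1 - r_2 > r_1/2$, so $|u-v|^2$ dominates everything since each factor $r_i \le |u-v|$ up to a constant. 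The point is that $\log r_1^2 - \log r_2^2$ can be large only when $r_1/r_2$ is large, but then $|u-v|^2$ is correspondingly large.

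The main obstacle is getting the constant exactly $2$ rather than a larger absolute constant; the crude case analysis above gives some constant, and sharpening it to $2$ requires being careful about how the angular and radial contributions trade off. A cleaner route that I would actually pursue is the substitution used in \cite{CH80}: parametrize and reduce to showing $\phi(s) := |\,2(\log s)\, s \sin\beta\,| \le 2|se^{i\beta} - 1|^2$ for $s = r_1/r_2 \ge 1$ (having scaled out $r_2$, which is legitimate since both sides are homogeneous of degree $2$ in the scaling $(u,v)\mapsto(\lambda u,\lambda v)$ only after noting $\log$ is not homogeneous — so in fact I would instead scale so that, say, $r_2 = 1$ is \emph{not} available and keep the genuinely two-variable inequality). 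Given the homogeneity failure, the honest reduction is to set $x = r_1^2$, $y = r_2^2$ and prove $|(\log x - \log y)\sqrt{xy}\,\sin\gamma| \le 2(x + y - 2\sqrt{xy}\cos\gamma)$ for all $x,y>0$ and $\gamma\in\R$; fixing $x,y$ and optimizing over $\gamma$ reduces this to a single-variable calculus inequality in $t = x/y$, namely $|\log t|\sqrt t \le 2\max_\gamma\bigl[(1+t-2\sqrt t\cos\gamma)/|\sin\gamma|\bigr]$, which can be checked by elementary calculus. I expect this final one-variable verification to be the only genuinely computational step, and I would present it compactly, noting that equality-type behavior as $t\to 1$ forces the constant $2$.
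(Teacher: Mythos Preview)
Your reduction to
\[
\bigl(\log |u|^2 - \log |v|^2\bigr)\,\im(\overline{u}v)
\]
is correct and is exactly what the paper does. After that, however, you are missing the one simple observation that closes the argument immediately with the sharp constant~$2$, and instead drift into an unfinished case split and an unfinished optimization-in-$\gamma$ argument.

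The missing ingredient is the elementary bound
\[
\bigl|\im(\overline{u}v)\bigr| \;\le\; \min\{|u|,|v|\}\,|u-v|,
\]
which follows at once from $\im(\overline{u}v)=\im\bigl((\overline{u}-\overline{v})v\bigr)$ (and symmetrically with $u$ and $v$ swapped). Combine this with the log estimate you already have: assuming $r_2=|v|\le |u|=r_1$,
\[
\bigl|\log|u|^2-\log|v|^2\bigr| = 2\log\frac{r_1}{r_2} \le \frac{2(r_1-r_2)}{r_2} \le \frac{2|u-v|}{\min\{|u|,|v|\}},
\]
(this is the paper's integral estimate, or your $\log t\le t-1$). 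Multiplying the two bounds gives
\[
\bigl|(\log|u|^2-\log|v|^2)\,\im(\overline{u}v)\bigr| \le \frac{2|u-v|}{\min\{|u|,|v|\}}\cdot \min\{|u|,|v|\}\,|u-v| = 2|u-v|^2,
\]
which is the lemma. No case analysis on $r_1/r_2$, no homogeneity considerations, and no single-variable calculus check are needed. Your proposal as written is incomplete because neither of your two routes is actually carried out; the paper's route is a two-line computation once the bound on $\im(\overline{u}v)$ is in hand.
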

\begin{proof}
For the convenience of the readers, we prove this lemma. We write
\begin{align*}
\im \l[ (u\log |u|^2-v\log |v|^2)(\overline{u} -\overline{v})\r] &=\im \l[-u\log |u|^2\overline{v}-v\log |v|^2\overline{u}\r]
\\
&=\l( \log |u|^2-\log |v|^2\r) \im (\overline{u}v)
\end{align*}
It is easily verified that
\begin{align*}
\im (u\overline{v}) \leq \min \{ |u|, |v|\} |u-v|.
\end{align*}
On the other hand, we have
\begin{align*}
\l| \log |u|-\log |v|\r| &=\l| \int_{0}^{1}\frac{d}{dt}\log (t|u|+(1-t)|v|) dt\r|\\
&=\l|\int_{0}^{1} \frac{1}{t|u|+(1-t)|v|} (|u|-|v|) dt \r|\\
&\leq \frac{1}{\min\{|u|, |v|\}} |u-v|.
\end{align*}
Therefore, we have
\begin{align*}
\l| \l( \log |u|^2-\log |v|^2\r) \im (\overline{u}v)\r| \leq 2|u-v|^2.
\end{align*}
This completes the proof.
\end{proof}
We consider the approximate equation of (\ref{LSE}). Given $m \in \N$, and define the functions $a_m$ and $b_m$ by
\begin{align*}
a_m(z)=
\l\{
\begin{array}{ll}
a(z)  &~\text{if}~ |z| \geq \frac{1}{m},\\[3pt]
mza\l( \frac{1}{m}\r)  &~\text{if}~  |z| \leq \frac{1}{m},
\end{array}
\r.
b_m(z)=\l\{
\begin{array}{ll}
 b(z)  &\ds ~\text{if}~|z| \leq m,\\[3pt]
\frac{z}{m}b\l( m\r)  & \ds ~\text{if}~|z| \geq m.
\end{array}
\r.
\end{align*}
Set
\begin{align*}
g_m(u) =-a_m(u)+b_m(u)
\end{align*}
and consider the following problem:
\begin{align}
\label{eq:3.2}
\l\{
\begin{array}{ll}
i\del_t u_m +\Delta u_m +g_m (u_m) =0,\\
u_m(0) =\varphi .
\end{array}
\r.
\end{align}
It is easily verified that there exists a unique solution $u_m \in C(\R, H^1_0 (\Omega ))\cap C^1 (\R ,H^{-1}(\Omega ))$ of (\ref{eq:3.2}) and
\begin{align}
\label{eq:3.3}
M(u_m(t)) =M(\varphi ) \quad \text{and} \quad E_m(u_m (t)) =E_m(\varphi )
\end{align}
for all $t\in \R$. Here, $E_m$ is defined by
\begin{align*}
E_m(u)=\frac{1}{2}\int_{\Omega} |\nabla u|^2+\int_{\Omega} \Phi_m (|u|) -\int_{\Omega}\Psi_m (|u|), 
\end{align*}
where $\Phi_m$ and $\Psi_m$ are defined by
\begin{align*}
\Phi_m (z) = \int_{0}^{|z|}a_m(s)ds ,\quad \Psi_m (z) =\int_{0}^{|z|} b_m (s)ds
\end{align*}
for all $z \in \C$. For $z>0$, we have
\begin{align}
\label{eq:3.4}
\int_{0}^{z}s\log s^2ds =\frac{z^2}{2}\log z^2-\frac{1}{2}z^2.
\end{align}
By the dominated convergence theorem and (\ref{eq:3.4}),
\begin{align}
\label{eq:3.5}
E_m(\varphi ) \underset{m\to\infty}{\longrightarrow} E(\varphi )+\frac{1}{2}M(\varphi ).
\end{align}
By (\ref{eq:3.3}), (\ref{eq:3.5}) and $\Phi_m \geq 0$, we obtain
\begin{align}
\label{eq:3.6}
\| \nabla u_m(t) \|_{L^2}^2 \leq C(\| \varphi \|_W ) < \infty 
\end{align}
for all $t \in \R$. By (\ref{eq:3.3}), (\ref{eq:3.5}) and (\ref{eq:3.6}), we deduce that
\begin{align}
\label{eq:3.7}
\int_{\Omega}\Phi_m (|u_m(t)|) &=E_m (\varphi )-\frac{1}{2}\| \nabla u_m(t)\|_{L^2}^2-\Psi_m (|u_m(t)|) \\
&\leq C(\| \varphi \|_W ) \notag
\end{align}
for all $t\in \R$. We set 
\begin{align}
\label{eq:3.8}
M := \sup_{m \in\N} \| u_m\|_{L^{\infty}(\R ,H^1_0)}<\infty .
\end{align}

Next, we prove $(u_m)_{m\in\N}$ is a Cauchy sequence in $C([-T,T] , L^2(\Omega ' ))$ for all $T>0$ and $\Omega' \subset\subset \Omega$. 
We fix a function $\psi \in C^{\infty}_c (\R^N)$ such that $\psi$ is radial and
\begin{align*}
\psi (x)=
\l\{
\begin{array}{ll}
1 & ~\text{if}~|x|\leq 1, \\[3pt]
0& ~\text{if}~|x|\geq 2,
\end{array}
\r.
\end{align*}
and set $\psi_R (x) =\psi (x/R)$. By using the equation (\ref{eq:3.2}) and (\ref{eq:3.8}), we have
 \begin{align*}
 \frac{d}{dt}\| \psi_R (u_m-u_n)\|_{L^2}^2 &= 2\mathrm{Im}\rbra[\psi_R^2 (i\partial_t u_m-i\partial_t u_n), u_m-u_n]\\
 &=2\im \rbra[ \nabla (\psi_R^2 )\cdot\nabla (u_m-u_n),u_m-u_n]\\
&\quad -2\im \rbra[\psi_R^2 (g_m(u_m)-g_n(u_n) ), u_m-u_n] \\
&\leq \frac{C(M)}{R} +L_1+L_2 ,
 \end{align*}
 where $L_1$ and $L_2$ are defined by
\begin{align*}
L_1 &=  2\im \rbra[\psi_R^2 (a_m(u_m)-a_n(u_n) ), u_m-u_n] , \\
L_2&= -2\im \rbra[\psi_R^2 (b_m(u_m)-b_n(u_n) ), u_m-u_n] .
\end{align*}
To estimate $L_1$ and $L_2$, we prove the following key lemma.
 \begin{lemma}
 \label{lem:3.5}
 Let $n \in \N$. For all $u, v \in \C$, we have 
 \begin{align}
 \label{eq:3.9}
 \l| \im \l[ (a_n(u)-a_n(v) )(\overline{u} -\overline{v})\r]\r| \leq C|u-v|^2, 
\\ 
\label{eq:3.10}
|\im \l[ (b_n(u)-b_n(v) )(\overline{u} -\overline{v})\r]| \leq C|u-v|^2,
 \end{align}
 where $C$ is independent of $n$.
 \end{lemma}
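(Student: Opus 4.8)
The plan is to exploit the radial structure shared by $a_n$ and $b_n$: each is of the form $h(z)=\frac{z}{|z|}\psi(|z|)$ for some real-valued $\psi$ on $\R_{+}$ which is linear near $0$ (so that $h(0)=0$), and for any such $h$ one has $h(z)\overline z=|z|\psi(|z|)\in\R$. Expanding $(h(u)-h(v))(\overline u-\overline v)$ and discarding the two real diagonal terms, one obtains the algebraic identity
\begin{align*}
\im\big[(h(u)-h(v))(\overline u-\overline v)\big]=\big(\mu(|v|)-\mu(|u|)\big)\,\im(u\overline v),\qquad \mu(x):=\frac{\psi(x)}{x},
\end{align*}
valid for $u,v\neq 0$; this is exactly the computation carried out in the proof of Lemma~\ref{lem:3.4}, where $\mu(x)=\log x^2$. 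Since $h(0)=0$, the left-hand sides of \eqref{eq:3.9}--\eqref{eq:3.10} vanish when $u=0$ or $v=0$, so only the case $u,v\neq 0$ needs to be treated.

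For $h=a_n$ and $h=b_n$ the associated functions $\mu$ are, respectively,
\begin{align*}
\mu_n^{a}(x)=\begin{cases} A(x)/x^2 & \text{if }x\ge 1/n,\\[2pt] n^2 A(1/n) & \text{if }0<x\le 1/n,\end{cases}\qquad
\mu_n^{b}(x)=\begin{cases} B(x)/x^2 & \text{if }0<x\le n,\\[2pt] B(n)/n^2 & \text{if }x\ge n,\end{cases}
\end{align*}
both continuous and piecewise $C^1$ on $(0,\infty)$ (recall $B\equiv 0$ on $(0,e^{-3}]$, so $\mu_n^{b}\equiv 0$ there). By the identity above, $\big|\im[(a_n(u)-a_n(v))(\overline u-\overline v)]\big|=|\mu_n^{a}(|u|)-\mu_n^{a}(|v|)|\,|\im(u\overline v)|$, and likewise for $b_n$. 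Assuming $|u|\le|v|$ and using $|\im(u\overline v)|\le|u|\,|u-v|$ (contained, by symmetry, in the proof of Lemma~\ref{lem:3.4}) together with
\begin{align*}
|\mu_n(|u|)-\mu_n(|v|)|\le\int_{|u|}^{|v|}|\mu_n'(s)|\,ds\le\frac{1}{|u|}\Big(\sup_{s>0}s|\mu_n'(s)|\Big)\int_{|u|}^{|v|}ds\le\frac{C_0}{|u|}\,|u-v|,
\end{align*}
where $C_0:=\sup_{n\in\N}\sup_{s>0}s|\mu_n'(s)|$ and we used $s\ge|u|$ on the interval, one gets $\big|\im[(h(u)-h(v))(\overline u-\overline v)]\big|\le C_0|u-v|^2$. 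Thus the lemma reduces to showing $C_0<\infty$.

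Establishing $C_0<\infty$ is the one genuine computation, and it is precisely what the two branch constants in the definition of $A$ (hence of $B=F+A$) were arranged for. On the intervals where $\mu_n$ is locally constant ($s<1/n$ for $\mu_n^{a}$; $0<s\le e^{-3}$ and $s>n$ for $\mu_n^{b}$) one has $\mu_n'=0$. On the complement $\mu_n$ coincides with $s\mapsto A(s)/s^2$ (resp.\ $s\mapsto B(s)/s^2$), so there $s\mu_n'(s)=\frac{A'(s)}{s}-\frac{2A(s)}{s^2}$ (resp.\ with $B$ in place of $A$); substituting the two explicit branches one finds, in the $a_n$ case, the value $-2$ when $s\le e^{-3}$ and the value $2(e^{-3}/s-1)^2-2\in[-2,0)$ when $s\ge e^{-3}$, and in the $b_n$ case the value $0$ when $s\le e^{-3}$ (since $B\equiv0$) and $2(1-e^{-3}/s)^2\in[0,2)$ when $s\ge e^{-3}$. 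In every case $s|\mu_n'(s)|\le 2$ uniformly in $n$, so $C_0\le 2$ and \eqref{eq:3.9}--\eqref{eq:3.10} hold with $C=2$.

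There is no serious obstacle: the radial-structure identity collapses the problem to the one-variable estimate $\sup_{n,s}s|\mu_n'(s)|<\infty$, which is settled by direct differentiation. The only care needed is bookkeeping — treating $u=0$ or $v=0$ separately (both sides vanish), remembering that $\mu_n$ is merely piecewise $C^1$ so that one must integrate $\mu_n'$ rather than invoke the mean value theorem, and checking that the constants $3$, $4e^{-3}$, $-e^{-6}$ in the upper branch of $A$ are exactly those for which $s\frac{d}{ds}(A(s)/s^2)$ and $s\frac{d}{ds}(B(s)/s^2)$ stay bounded (by $2$) across $s=e^{-3}$.
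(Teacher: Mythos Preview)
Your proof is correct and is a genuinely cleaner route than the paper's. Both arguments start from the same radial-structure identity
\[
\im\bigl[(h(u)-h(v))(\overline u-\overline v)\bigr]=\bigl(\mu(|v|)-\mu(|u|)\bigr)\,\im(u\overline v),\qquad \mu(x)=\psi(x)/x,
\]
which is (\ref{eq:3.11}) in the paper. From there, however, the paper partitions $\C$ into the three annuli $R_1=\{|z|<1/n\}$, $R_2=\{1/n\le|z|\le e^{-3}\}$, $R_3=\{|z|>e^{-3}\}$, treats the six possible placements of $(u,v)$ separately, and in the mixed cases inserts auxiliary points $w$ (or $w_1,w_2$) on the circles $|z|=1/n$ and $|z|=e^{-3}$ to piece together the estimates. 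Your approach replaces all of this casework with the single scalar bound $\sup_{n,s}s|\mu_n'(s)|\le 2$, checked by differentiating $A(s)/s^2$ and $B(s)/s^2$ on each smooth branch; combined with $|\mu_n(|u|)-\mu_n(|v|)|\le\int_{|u|}^{|v|}|\mu_n'(s)|\,ds\le\frac{2}{\min(|u|,|v|)}|u-v|$ and $|\im(u\overline v)|\le\min(|u|,|v|)|u-v|$, this gives the inequality in one stroke with the explicit constant $C=2$. What the paper's decomposition buys is that it never writes down a derivative and works entirely with finite differences, which some readers may find more elementary; what your argument buys is brevity, a sharp constant, and a transparent explanation of why the particular quadratic branch $3x^2+4e^{-3}x-e^{-6}$ (which makes $A\in C^2$ at $e^{-3}$) keeps $s\mu_n'(s)$ continuous and bounded across the junction.
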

 \begin{proof}
 We only prove (\ref{eq:3.9}). One can prove (\ref{eq:3.10}) in a similar way. First, we recall that 
 \begin{align*}
 a(x) =\l\{
\begin{array}{ll}
-x\log x^2 &~\text{if}~ 0< x\leq e^{-3},\\[3pt]
\ds 3x+4e^{-3}-\frac{e^{-6}}{x} &~\text{if}~x\geq e^{-3}.
\end{array}
\r. 
 \end{align*}
 We may consider only the case $\frac{1}{n}<e^{-3}$. Define
 \begin{align*}
 &R_1 =\l\{ z\in \C ~;~0<|z|<\frac{1}{n}\r\},\\
 &R_2 =\l\{ z\in \C ~;~\frac{1}{n}\leq |z|\leq e^{-3}\r\},\\
 &R_3 =\l\{ z\in \C ~;~e^{-3}<|z| \r\},
 \end{align*}
 and
 \begin{align*}
 I_n= \im \l[ (a_n(u)-a_n(v) )(\overline{u} -\overline{v})\r] .
 \end{align*}
 When $u, v \in R_1$, $I_n=0$. When $u,v \in R_2$, we have 
\begin{align*}
|I_n| \leq 2|u-v|^2
\end{align*}
by Lemma~\ref{lem:3.4}. When $u, v \in R_3$, a simple calculation shows that (\ref{eq:3.9}) holds. 

Next, we consider that each of $u$ and $v$ belongs to the different region. Without loss of generality, we may assume $0<|v|<|u|$. Note that
\begin{align*}
I_n &=\im \l[ -a_n(u)\overline{v}-a_n(v)\overline{u}\r] \\
&= \im \l[ -\frac{u}{|u|}a_n(|u|)\overline{v} -\frac{v}{|v|}a_n(|v|)\overline{u}\r]\\
&=\l[ \frac{a_n(|u|)}{|u|}-\frac{a_n(|v|)}{|v|}\r] \im (\overline{u}v).
\end{align*}
Set $ h_n(s) =\frac{a_n(s)}{s}$ for $s>0$. We write
\begin{align}
\label{eq:3.11}
I_n =(h_n(|u|)-h_n(|v|) ) \im (\overline{u}v).
\end{align}
When $v\in R_1$ and $u \in R_2$, we take $w \in \C$ such that
\begin{align*}
|w|=\frac{1}{n}\quad\text{and}\quad |u-v|=|u-w|+|w-v|.
\end{align*}
We note that $h_n(|v|)=-\log(\frac{1}{n})^2=h_n(|w|)$ and
\begin{align*}
\l| h_n(|u|)-h_n(|w|)\r| &= \l| -\log |u|^2 +\log |w|^2 \r|\\
&\leq \frac{2}{|w|}|u-w|.
\end{align*}
Since
\begin{align}
\label{eq:3.12}
\l| \im (\overline{u}v)\r| \leq |v|\,|u-v|,
\end{align}
we deduce that
\begin{align*}
|I_n| &\leq \l( \l| h_n(|u|)-h_n(|w|) \r| +\l| h_n(|w|)-h_n (|v|)\r| \r) \l| \im (\overline{u}v)\r| \\
&\leq \frac{2}{|w|}|u-w| \cdot |v|\, |u-v| \\
&\leq 2|u-v|^2.
\end{align*}
Similarly, one can prove that (\ref{eq:3.9}) holds when $v \in R_2$ and $u \in R_3$. 

Finally, we consider the case $v\in R_1$ and $u \in R_3$. We take $w_1, w_2 \in \C$ such that
\begin{align*}
|w_1|=e^{-3}, |w_2|=\frac{1}{n}\quad\text{and}\quad |u-v|=|u-w_1|+|w_1-w_2|+|w_2-v|.
\end{align*}
A simple calculation shows that 
\begin{align}
\label{eq:3.13}
\begin{array}{l}
\ds \l| h_n(|u|)-h_n(|w_1|)\r| \leq C|u-w_1|, \\[3pt]
\ds \l| h_n(|w_1|)-h_n(|w_2|)\r| \leq \frac{2}{|w_2|}|w_1-w_2|, \\[3pt]
\ds \l| h_n(|w_2|)-h_n(|v|)\r| =0,
\end{array}
\end{align}
where $C$ is independent of $n$. By (\ref{eq:3.12}) and (\ref{eq:3.13}), we deduce that
\begin{align*}
|I_n| &\leq \l( C|u-w_1|+\frac{2}{|w_2|}|w_1-w_2|\r) |v|\,|u-v|\\
&\leq C\l( |u-w_1|+|w_1-w_2|\r) |u-v| \\
&\leq C|u-v|^2.
\end{align*}
This completes the proof.
 \end{proof}
We begin with the estimate of $L_1$. We write
 \begin{align*}
 L_1 &= 2\im \rbra[\psi_R^2 (a_m(u_m)-a_n(u_m) ), u_m-u_n]\\
 &\quad +2\im \rbra[\psi_R^2 (a_n(u_m)-a_n(u_n) ), u_m-u_n]\\
 &=L_{11}+L_{12}.
 \end{align*}
 We note that
 \begin{align*}
 \l| a_m(u_m)-a(u_m)\r| &=\l| \chi_{|u_m| \leq \frac{1}{m}} \l( mu_ma\l( \frac{1}{m}\r) - a(u_m)\r)\r| \\
 &\leq \l| \chi_{|u_m| \leq \frac{1}{m}} \l( -u_m\log\l( \frac{1}{m}\r)^2 +u_m\log |u_m|^2\r) \r| \\
 &\leq -\frac{2}{m}\log \l( \frac{1}{m}\r)^2 .
 \end{align*}
 Hence, $L_{11}$ is estimated as
 \begin{align}
 \label{eq:3.14}
 L_{11} &\leq 2\l( -\frac{2}{m}\log \l( \frac{1}{m}\r)^2 -\frac{2}{n}\log \l( \frac{1}{n}\r)^2\r) \| \psi_R^2 (u_m-u_n)\|_{L^1}\\
&\leq 8|\Omega_{2R} |^{\frac{1}{2}}\l( \frac{\log m}{m}+\frac{\log n}{n}\r) \| u_m-u_n \|_{L^2} \notag\\
&\leq 16|\Omega_{2R} |^{\frac{1}{2}}\l( \frac{\log m}{m}+\frac{\log n}{n}\r) \| \varphi \|_{L^2} ,
\notag 
\end{align}
where $\Omega_{2R}=\Omega \cap B(0 ,2R)$. By Lemma~\ref{lem:3.5}, $L_{12}$ is estimated as
\begin{align}
\label{eq:3.15}
L_{12} &= 2\im \rbra[ \psi_R^2 (a_n(u_m)-a_n(u_n) ), u_m-u_n]\\
&\leq C\| \psi_R (u_m-u_n)\|_{L^2}^2. \notag
\end{align}
Next, we estimate $L_2$. We write
\begin{align*}
L_2&=-2\im \rbra[\psi_R^2 (b_m(u_m)-b_n(u_m) ), u_m-u_n]\\
&\quad -2\im \rbra[\psi_R^2 (b_n(u_m)-b_n(u_n) ), u_m-u_n]\\
&=L_{21}+L_{22}.
\end{align*}
We note that
\begin{align*}
\l| b_m(u_m)-b(u_m)\r| &=\l|  \chi_{|u_m| \geq m} (b_m(u_m) -b(u_m) )\r| \\
&\leq C \chi_{|u_m| \geq m}|u_m| \log |u_m|^2,
\end{align*}
where $C$ is independent of $m$. Therefore, we have
\begin{align}
\label{eq:3.16}
\| b_m(u_m) -b(u_m) \|_{L^2}^2 &\leq C\int_{|u_m|\geq m}|u_m|^2\l( \log |u_m|^2\r)^2 dx\\
&\leq C\int_{|u_m|\geq m} |u_m|^{2(1+\eps )}|u_m|^{-\eps}dx \notag\\
&\leq C\| u_m\|_{H^1}^{2(1+\eps )}m^{-\eps}\leq C(M)m^{-\eps}, \notag
\end{align}
where we fix $\eps =\eps_N >0$ satisfying
\begin{align*}
2<2(1+\eps )<
\l\{
\begin{array}{ll}
\infty &~\text{if}~N=1,2,\\[3pt]
\frac{2N}{N-2} &~\text{if}~N\geq 3
\end{array}
\r.
\end{align*}
for each dimension $N$. Applying (\ref{eq:3.16}), $L_{21}$ is estimated as
\begin{align}
\label{eq:3.17}
L_{21}&\leq C(M)\l( m^{-\frac{\eps}{2}} +n^{-\frac{\eps}{2}} \r) \| u_m-u_n\|_{L^2}\\
&\leq C(M)\l( m^{-\frac{\eps}{2}} +n^{-\frac{\eps}{2}} \r) .
\notag
\end{align}
By Lemma~\ref{lem:3.5}, $L_{22}$ is estimated as
\begin{align}
\label{eq:3.18}
L_{22} &= 2\im \rbra[\psi_R^2 (b_n(u_m)-b_n(u_n) ), u_m-u_n]\\
&\leq C\| \psi_R (u_m-u_n)\|_{L^2}^2. \notag
\end{align}
By (\ref{eq:3.14}), (\ref{eq:3.15}), (\ref{eq:3.17}) and (\ref{eq:3.18}), we obtain that
\begin{align*}
\frac{d}{dt}\| \psi_R (u_m-u_n) \|_{L^2}^2 &\leq \frac{C(M)}{R} +16|\Omega_{2R} |^{\frac{1}{2}}\l( \frac{\log m}{m}+\frac{\log n}{n}\r) \| \varphi \|_{L^2}\\
&\quad +C(M)\l( m^{-\frac{\eps}{2}} +n^{-\frac{\eps}{2}} \r) +C\| \psi_R (u_m-u_n)\|_{L^2}^2 .
\end{align*}
Integrating over $[-T,T]$ for any $T>0$ and applying the Gronwall inequality, we deduce that
\begin{align*}
\| \psi_R (u_m-u_n)\|_{C([-T,T],L^2 )}^2 &\leq  e^{CT}C(M )T \biggl( \frac{1}{R} 
+|\Omega_{2R} |^{\frac{1}{2}}\l( \frac{\log m}{m}+\frac{\log n}{n}\r) \\
& \qquad +\l( m^{-\frac{\eps}{2}} +n^{-\frac{\eps}{2}} \r) \biggr) .
\end{align*}
Taking the $\limsup_{m,n\to\infty}$ in the preceding inequality, we have
\begin{align}
\label{eq:3.19}
\limsup_{m,n \to \infty} \| \psi_R (u_m-u_n)\|_{C([-T,T],L^2 )}^2 \leq e^{CT}\frac{C(M)T}{R}. 
\end{align}
Now fix any $R_0 >0$ and take $R>R_0$. By (\ref{eq:3.19}), we have 
\begin{align*}
\limsup_{m,n \to \infty} \| u_m-u_n\|_{C([-T,T],L^2(\Omega_{R_0}) )}^2 &\leq \limsup_{m,n \to \infty} \| \psi_R (u_m-u_n)\|_{C([-T,T],L^2(\Omega ) )}^2 \\
&\leq e^{CT}\frac{C(M)T}{R} \underset{R\to\infty}{\longrightarrow} 0.
\end{align*}
We deduce that $(u_m)_{m\in\N}$ is a Cauchy sequence in $C([-T,T] , L^2(\Omega_{R_0}) )$. Since $T>0$ and $R_0>0$ were arbitrary, combining with (\ref{eq:3.3}) and (\ref{eq:3.6}), there exists $u \in L^{\infty}(\R ,H^1_0(\Omega ))$ such that
\begin{align}
\label{eq:3.20}
 &u_m \to u ~\text{in}~C([-T,T] , L^2 (\Omega' ))~\text{for all}~T>0~\text{and}~\Omega' \subset\subset \Omega , \\
\label{eq:3.21}
&u_m(t) \wto u(t)~\text{in}~H^1_0 (\Omega )~\text{for all}~t\in \R.
\end{align}
By (\ref{eq:3.20}), (\ref{eq:3.7}) and Fatou's lemma, we deduce that
\begin{align}
\label{eq:3.22}
\| u\|_{L^{\infty} (\R ,X)} \leq C(\| \varphi \|_X ).
\end{align}
Therefore, $u \in L^{\infty} (\R, W)$. It is easily verified that $u$ satisfies 
\begin{align*}
i\del_t u +\Delta u +u\log (|u|^2) =0
\end{align*}
in the sense of distribution $\scD' (\R\times\Omega )$ and it follows $u \in W^{1, \infty}(\R , W^{\ast})$ from the equation and Lemma \ref{lem:3.2}. By (\ref{eq:3.3}), (\ref{eq:3.5}), (\ref{eq:3.20}), (\ref{eq:3.21}) and Fatou's lemma, we have
\begin{align}
\label{eq:3.23}
M(u(t)) \leq M(\varphi ) \quad\text{and}\quad E(u(t)) \leq E(\varphi )
\end{align}
for all $t\in \R$. 

Next, we prove uniqueness in the class $L^{\infty}(\R ,W)\cap W^{1,\infty}(\R,W^{\ast})$. Let $u$ and $v$ be two solutions of (\ref{LSE}). Taking the $W^{\ast}-W$ duality, we have
\begin{align*}
\frac{d}{dt}\| u-v\|_{L^2}^2 &=2\tbra[u_t-v_t,u-v]_{W^{\ast},W} \\
&=-\im\int_{\Omega} (u\log |u|^2-v\log |v|^2) (\overline{u}-\overline{v}).
\end{align*}
Applying  Lemma \ref{lem:3.4} and Gronwall's inequality, we obtain $u=v$. By uniqueness and (\ref{eq:3.23}), we deduce that
\begin{align}
\label{eq:3.24}
M(u(t)) = M(\varphi ) \quad\text{and}\quad E(u(t)) =E(\varphi )
\end{align}
for all $t \in \R$. By (\ref{eq:3.24}) and Lemma \ref{lem:3.1}, we obtain $u\in C(\R ,W)$. Finally, continuous dependence is proved by Lemma \ref{lem:3.1}, Lemma \ref{lem:3.4} and conservation of energy. We omit the detail.
\begin{remark}
Our approach is also applicable for the fractional logarithmic Schr\"{o}dinger equation studied in \cite{dSZ15, A17}.
\end{remark}
\section{The damped nonlinear Schr\"{o}dinger equation}
\label{sec:4}
In this section, we consider the following  damped nonlinear Schr\"{o}dinger equation:
\begin{align}
 \label{DSE}
 \l\{
\begin{array}{lll}
\ds i \del_{t}u + \Delta u +i \frac{u}{|u|^{\alpha}}= 0 &\text{in} &[0,\infty )\times\Omega , \\[3pt]
\ds u=0  &\text{on}  &[0,\infty ) \times \del \Omega ,\\[3pt]
u(0) =\varphi   &\text{on} &\Omega ,
\end{array}
\r.
\end{align}
where $0\leq \alpha \leq 1$ and $\Omega \subset \R^N$ is an open set. The damping term in the equation (\ref{DSE}) appears in mechanics (see \cite{AAC06, AD03} and references therein). The case $\alpha =0$ (linear damping) has been well studied; see e.g., \cite{T84,T90,F01,OT09,D12}. In the case $\alpha =1$, the damping term is referred to as {\it Coulomb friction}, and it has been studied in the case of wave equations in \cite{BCD07}. Carles and Gallo \cite{CG11} studied the Cauchy problem for (\ref{DSE}) on compact manifolds without boundary when $0<\alpha \leq1$ and proved that finite time extinction phenomena occurs when $N \leq 3$. The restriction of dimension comes from the exponents in Nash type inequalities. This result was extended in \cite{CO15} for more general nonlinearities. 
%

Our main aim of this section is to prove the following theorem.
\begin{theorem}
\label{thm:4.1}
 Let $N\geq 1$, $0<\alpha <1$ and assume that $\Omega$ is a bounded open set. For every $\varphi \in H^1_0 (\Omega )$, \textup{(\ref{DSE})} has a unique global solution $u \in C([0,\infty ) , H^1_0 (\Omega))\cap C^1([0,\infty ) , H^{-1} (\Omega)) $ of the equation \textup{(\ref{DSE})}. Moreover, $u$ satisfies the following a priori estimate:
 \begin{align*}
 \| u\|_{C([0,\infty ) , H^1_0  )} \leq \| \varphi \|_{H^1_0}.
 \end{align*}
 \end{theorem}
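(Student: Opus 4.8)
The plan is to adapt the truncated-approximation scheme of Section~\ref{sec:2.1}; the argument for (\ref{DSE}) is in fact shorter, because the damping makes every energy estimate dissipative, so no Yudovich--Gronwall step is needed. Write $h(z)=z|z|^{-\alpha}$ for $z\in\C\setminus\{0\}$ and $h(0)=0$, so that the nonlinearity is $g(u)=ih(u)$. Everything rests on two structural facts. Since $\alpha\le1$, the function $z\mapsto\frac{1}{2-\alpha}|z|^{2-\alpha}$ is convex on $\C\cong\R^2$ and its (real) gradient is $h$; hence $h$ is monotone,
\[
\re\big[(h(z)-h(w))(\overline z-\overline w)\big]\ge0\qquad\text{for all }z,w\in\C ,
\]
and, for $u\in H^1_0(\Omega)$, $\re[\nabla h(u)\cdot\overline{\nabla u}]\ge0$ a.e.\ (indeed $\ge(1-\alpha)|u|^{-\alpha}|\nabla u|^2$). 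I will also use the elementary Hölder estimate $|h(z)-h(w)|\le C_\alpha|z-w|^{1-\alpha}$.

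For the approximation, given $m\in\N$ set $h_m(z)=h(z)$ if $|z|\ge\frac1m$ and $h_m(z)=m^\alpha z$ if $|z|\le\frac1m$, and $g_m(u)=ih_m(u)$; then $g_m$ is globally Lipschitz and $h_m=\nabla\Phi_m$ for a radial convex $\Phi_m$, so both facts above hold for $h_m$ uniformly in $m$. By the standard theory for Schr\"odinger equations with Lipschitz nonlinearity (\cite{C03}), (\ref{DSE}) with $g$ replaced by $g_m$ has a unique global solution $u_m\in C([0,\infty),H^1_0(\Omega))\cap C^1([0,\infty),H^{-1}(\Omega))$, and pairing the equation with $\overline{u_m}$, respectively (after a time regularization, as in \cite{Oz06}) with $\partial_t\overline{u_m}$, gives
\[
\frac{d}{dt}\|u_m\|_{L^2}^2=-2\int_\Omega\re\big[h_m(u_m)\overline{u_m}\big]\,dx\le0,\qquad
\frac{d}{dt}\|\nabla u_m\|_{L^2}^2=-2\int_\Omega\re\big[\nabla h_m(u_m)\cdot\overline{\nabla u_m}\big]\,dx\le0 .
\]
In particular $\|u_m\|_{C([0,\infty),H^1_0)}\le\|\varphi\|_{H^1_0}=:M$ for every $m$, which already yields the a priori estimate once the limit is shown to be a solution.

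Next I show $(u_m)$ is a Cauchy sequence in $C([0,T],L^2(\Omega))$ for every $T>0$. For $m<n$,
\[
\frac{d}{dt}\|u_m-u_n\|_{L^2}^2=-2\im\big(g_m(u_m)-g_n(u_m),\,u_m-u_n\big)-2\im\big(g_n(u_m)-g_n(u_n),\,u_m-u_n\big)=:A_1+A_2 ,
\]
and $A_2=-2\int_\Omega\re[(h_n(u_m)-h_n(u_n))(\overline{u_m}-\overline{u_n})]\le0$ by monotonicity of $h_n$. Since $g_m-g_n$ vanishes on $\{|z|\ge\frac1m\}$ and $|g_m(z)-g_n(z)|\le 2m^{\alpha-1}$ otherwise, $\|g_m(u_m)-g_n(u_m)\|_{L^2}\le 2|\Omega|^{1/2}m^{\alpha-1}$ and hence $A_1\le 4\|\varphi\|_{L^2}\|g_m(u_m)-g_n(u_m)\|_{L^2}\le C(\varphi,\Omega)m^{\alpha-1}$. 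Because $\alpha<1$, integrating over $[0,t]$ gives directly, with \emph{no} Gronwall inequality,
\[
\|u_m-u_n\|_{C([0,T],L^2)}^2\le C(\varphi,\Omega)\,T\,(\min\{m,n\})^{\alpha-1}\underset{m,n\to\infty}{\longrightarrow}0 .
\]
Thus $u_m\to u$ in $C([0,T],L^2)$ for all $T$; the uniform $H^1_0$ bound gives $u\in L^\infty(0,\infty;H^1_0)$ with $\|\nabla u(t)\|_{L^2}\le\|\nabla\varphi\|_{L^2}$ for all $t$ (weak lower semicontinuity) and $u_m(t)\wto u(t)$ in $H^1_0(\Omega)$. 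From $\|g_m(u_m)-g(u_m)\|_{L^2}\le 2|\Omega|^{1/2}m^{\alpha-1}$ and $\|g(u_m)-g(u)\|_{L^2}^2\le C_\alpha^2\int_\Omega|u_m-u|^{2(1-\alpha)}\le C_\alpha^2|\Omega|^{\alpha}\|u_m-u\|_{L^2}^{2(1-\alpha)}$ (the Hölder estimate for $h$ and Hölder's inequality, $\Omega$ bounded) one gets $g_m(u_m)\to g(u)$ in $C([0,T],L^2)$, so passing to the limit in Duhamel's formula $u_m(t)=U(t)\varphi+i\int_0^tU(t-s)g_m(u_m(s))\,ds$ shows that $u$ solves (\ref{DSE}) and $g(u)\in C([0,\infty),L^2(\Omega))$. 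Uniqueness follows from monotonicity exactly as for $A_2$: two solutions satisfy $\frac{d}{dt}\|u-v\|_{L^2}^2\le0$.

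The only genuinely delicate point is the regularity $u\in C([0,\infty),H^1_0)$. As $u\in L^\infty_tH^1_0\cap C_tL^2$, the map $t\mapsto u(t)$ is weakly continuous into $H^1_0$, so it suffices to show $t\mapsto\|\nabla u(t)\|_{L^2}$ is continuous; I would get this from the dissipation identity
\[
\|\nabla u(t)\|_{L^2}^2=\|\nabla\varphi\|_{L^2}^2-2\int_0^t\!\int_\Omega\re\big[\nabla h(u)\cdot\overline{\nabla u}\big]\,dx\,d\tau ,
\]
whose right-hand side is continuous in $t$. The obstacle is that this identity cannot be obtained by pairing the equation with $\partial_t\overline u$: for the limit solution $u_t$ lies only in $H^{-1}$ and $h(u)$ need not lie in $H^1$ (near a zero of $u$ one has $|\nabla h(u)|\sim|u|^{-\alpha}|\nabla u|$, which fails to be square integrable in general once $\alpha\ge\frac12$), so $u$ does not inherit $H^1$-regularity from Duhamel's formula either; it must instead be justified by regularizing the integral equation, in the spirit of the derivation of the conservation laws in Section~\ref{sec:2} and \cite{Oz06}. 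Once $u\in C([0,\infty),H^1_0)$ is established, the identity $\Delta u=-iu_t-g(u)$ together with $g(u)\in C_tL^2$ and the isomorphism $\Delta\colon H^1_0(\Omega)\to H^{-1}(\Omega)$ gives $u_t\in C([0,\infty),H^{-1})$, i.e.\ $u\in C^1([0,\infty),H^{-1})$; and $\|u\|_{C([0,\infty),H^1_0)}\le\|\varphi\|_{H^1_0}$ is the bound already recorded.
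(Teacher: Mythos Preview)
Your argument follows the paper's almost verbatim: the same truncation of $h$ near the origin, the same a~priori bounds via dissipation, the same monotonicity observation to make $A_2\le 0$ (this is the paper's Lemma~\ref{lem:4.2}), the same pointwise bound $|g_m(u_m)-g(u_m)|\le 2m^{-(1-\alpha)}$ for $A_1$, and the same passage to the limit in Duhamel's formula using the H\"older-type inequality $|h(z)-h(w)|\lesssim |z-w|^{1-\alpha}$ together with the boundedness of $\Omega$.

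The one place you diverge is the upgrade to $u\in C([0,\infty),H^1_0)$, which you flag as ``genuinely delicate'' and propose to obtain from a dissipation identity for $\|\nabla u(t)\|_{L^2}^2$ that you leave to be justified by regularization. The paper bypasses this entirely: from $g(u)\in C([0,\infty),L^2)\subset C([0,\infty),H^{-1})$ and the Duhamel formula (together with $u(t)\in H^1_0$ for every $t$), it concludes $u\in C^1([0,\infty),H^{-1})$; then the equation gives $\Delta u=-iu_t-ig(u)\in C([0,\infty),H^{-1})$; since also $u\in C_tL^2\subset C_tH^{-1}$, applying $(I-\Delta)^{-1}\colon H^{-1}\to H^1_0$ yields $u\in C([0,\infty),H^1_0)$. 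This route only uses $g(u)\in L^2$, never $g(u)\in H^1$, so your concern that $\nabla h(u)$ may fail to be square integrable is beside the point here. Note also that you have the order reversed at the end: the paper obtains $C^1_tH^{-1}$ first and deduces $C_tH^1_0$ from it, not the other way around.
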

 \begin{remark}
 In \cite{CG11}, they constructed a global weak solution for (\ref{DSE}) on compact manifolds without boundary by compactness arguments. Our construction of strong solution can be applied for the case compact manifolds in the same way.
 \end{remark}
 \begin{remark}
 We can construct strong $H^2(\Omega )$ solutions in the similar way as the proof of Theorem \ref{thm:4.1} (see also \cite{CG11}).
 \end{remark}
For the proof of Theorem \ref{thm:4.1}, we consider the approximate equation of (\ref{DSE}). Given $m \in \N$. Set
\begin{align*}
g(u)&=\frac{u}{|u|^{\alpha}}, \\
g_m(u)&=\l\{
\begin{array}{ll}
g(u)  &\ds ~\text{if}~|u| \geq \frac{1}{m},\\[8pt]
mug\l( \frac{1}{m}\r)  & \ds ~\text{if}~|u| \leq \frac{1}{m},
\end{array}
\r.
\end{align*}
and consider the following problem:
\begin{align}
\l\{
\begin{array}{ll}
 i \partial_{t}u_m + \Delta u_m +ig_m(u_m)= 0, \\
 u_m(0)=\varphi .
\end{array}
\r.
 \label{eq:4.2}
 \end{align}
It is easily verified that there exists a unique solution $u_m \in C([0,\infty ), H^1_0 (\Omega ) )\cap C^1([0,\infty ) , H^{-1} (\Omega))$ of (\ref{eq:4.2}) and 
\begin{align}
 \label{eq:4.3}
 \| u_m(t) \|_{L^2} \leq \| \varphi \|_{L^2} \text{ and } \| \nabla u_m (t)\|_{L^2} \leq \| \nabla \varphi \|_{L^2}
 \end{align}
 for all $t\in \R$. 

Next, we prove $(u_m)_{m\in\N}$ is a Cauchy sequence in $C([0,T] , L^2(\Omega ) )$ for all $T>0$. By using the equation (\ref{eq:4.2}), we have
 \begin{align*}
 \frac{d}{dt}\| u_m-u_n\|_{L^2}^2 &= 2\mathrm{Im}(i\partial_t u_m-i\partial_t u_n, u_m-u_n)\\
 &=-2\mathrm{Im} \rbra[i(g_m(u_m)-g_n(u_n)), u_m-u_n] \\
 &=-2\mathrm{Re} \rbra[g_m(u_m)-g_n(u_m), u_m-u_n] \\
 &\quad -2\mathrm{Re} \rbra[g_n(u_m)-g_n(u_n), u_m-u_n] \\
 &=K_1+K_2 .
 \end{align*}
 We begin with the estimate of $K_1$. We note that 
 \begin{align*}
 |g_m (u_m) -g(u_m) |
 &=\l| \chi_{|u_m| \leq \frac{1}{m}} \l( m^{\alpha}u_m -\frac{u_m}{|u_m|^{\alpha}} \r) \r| \\
 &\leq \chi_{|u_m| \leq \frac{1}{m}} \l( m^{\alpha}\cdot \frac{1}{m} +|u_m|^{1-\alpha}\r) \\
 &\leq 2m^{-(1-\alpha )}.
 \end{align*}
 Therefore, $K_1$ is estimated as 
\begin{align*}
 K_1 &=-2\mathrm{Re} \rbra[g_m(u_m)-g(u_m), u_m-u_n]\\
&\quad -2\mathrm{Re} \rbra[g(u_m)-g_n(u_m), u_m-u_n] \\
&\leq 4\l( m^{-(1-\alpha )}+n^{-(1-\alpha )} \r) \| u_m -u_n\|_{L^1} \\
&\leq 8|\Omega |^{\frac{1}{2}} \| \varphi \|_{L^2} 
\l( m^{-(1-\alpha )}+n^{-(1-\alpha )} \r) .
 \end{align*}
 In order to estimate $K_2$, we use the following lemma. This result extends Lemma 2.6 in \cite{CO15}.
 \begin{lemma}
 \label{lem:4.2}
 Let $f:\R_{+} \to \R$ be a monotone increasing function with $f(0)=0$. We extend $f$ to the complex plane by setting $f(z)=\frac{z}{|z|} f(|z|)$ for $z \in \C\setminus\{ 0\}$. Then,
 \begin{align*}
 \mathrm{Re} \Bigl[ (f(z_1)-f(z_2))(\overline{z_1- z_2}) \Bigr] \geq 0
 \end{align*}
 for all $z_1, z_2 \in \C$.
 \end{lemma}
 \begin{proof}
We may $z_1, z_2 \neq 0$.  Then, we have
\begin{align*}
 &\quad \mathrm{Re} \l[ \l( \frac{z_1}{|z_1|}f(|z_1|)-\frac{z_2}{|z_2|}f(|z_2|)\r) (\overline{z_1- z_2}) \r] \\
 &=f(|z_1|)|z_1| - \frac{f(|z_1|)}{|z_1|} \mathrm{Re} (z_1\overline{z_2})- \frac{f(|z_2|)}{|z_2|} \mathrm{Re} (z_2 \overline{z_1})+f(|z_2|)|z_2| \\
 &\geq   f(|z_1|)|z_1| -  f(|z_1|)|z_2| - f(|z_2|)|z_1| + f(|z_2|) |z_2| \\
 &= \bigl( f(|z_1|) -f(|z_2|) \bigr) (|z_1| -|z_2|) \\
&\geq 0,
 \end{align*}
where in the last inequality we have used the monotonicity of $f: \R_{+} \to \R$.
 \end{proof}
 We note that $\R_{+} \ni s \to g_n(s)$ is  a monotone increasing function with $g_n(0)=0$. Applying $K_2$ to Lemma~\ref{lem:4.2}, we obtain that
 \begin{align*}
 K_2=-2\mathrm{Re} \rbra[g_n(u_m)-g_n(u_n), u_m-u_n] \leq 0.
 \end{align*}
 Hence, we obtain 
\begin{align*}
 \frac{d}{dt}\| u_m-u_n\|_{L^2}^2 \leq 8|\Omega |^{\frac{1}{2}} \| \varphi \|_{L^2} 
\l( m^{-(1-\alpha )}+n^{-(1-\alpha )} \r) .
\end{align*} 
Integrating over $[0, T]$ for any $T>0$, we obtain that
\begin{align*}
\| u_m-u_n\|_{C([0,T],L^2 )}^2 &\leq  8T|\Omega |^{\frac{1}{2}} \| \varphi \|_{L^2(\Omega )} 
\l( m^{-(1-\alpha )}+n^{-(1-\alpha )} \r) \\
&\underset{m,n\to\infty}{\longrightarrow} 0.
\end{align*}
Therefore, there exists $u \in C([0,\infty ) ,L^2(\Omega ))$ such that
\begin{align}
\label{eq:4.4}
u_n \to u ~\text{in}~ C([0,T],L^2(\Omega ) )~\text{for all}~T>0. 
\end{align}
From (\ref{eq:4.3}) and (\ref{eq:4.4}), we deduce that $u \in L^{\infty} ([0,\infty ) , H^1_0 (\Omega) )$ and 
\begin{align*}
\| u(t)\|_{H^1(\Omega )} \leq \| \varphi \|_{H^1(\Omega )}
\end{align*}
for all $t>0$.

We shall prove that the function $u$ satisfies (\ref{DSE}) and lies in $C([0,\infty ) , H^1_0(\Omega) )$. Note that $u_m$ is a solution of the integral equation
\begin{align}
u_m(t)=U(t)\varphi -\int_{0}^{t} U(t-s)g_m(u_m(s))ds \quad\text{for all}~t>0. \label{eq:4.5}
\end{align}
Note that
\begin{align*}
|g_m(u_m) -g(u)| &\leq |g_m(u_m) -g(u_m)| +|g(u_m) -g(u)| \\
&\leq 2 m^{-(1-\alpha )} +|g(u_m) -g(u)|.
\end{align*}
Using the following elementary inequality
\begin{align*}
\l| \frac{u}{|u|^{\alpha}}-\frac{v}{|v^{\alpha}|}\r| \lesssim |u-v|^{1-\alpha} \quad \text{for all}~ u ,v \in \C ,
\end{align*}
we deduce that
\begin{align*}
\| g(u_m(s)) -g(u(s)) \|_{L^2} \leq C| \Omega |^{\frac{\alpha}{2}} \| u_m(s)-u(s)\|^{1-\alpha}_{L^2}
\end{align*}
for all $s>0$. Hence, we obtain $g_m(u_m) \to g(u)$ in $C([0,T], L^2 (\Omega ))$ for all $T>0$. Taking the limit in the integral equation (\ref{eq:4.5}) as $m \to \infty$, we conclude that
\begin{align}
u(t) = U(t)\varphi -\int_{0}^{t} U(t-s)g(u(s)) ds \quad\text{for all}~t>0. \label{eq:4.6}
\end{align}
Since $g(u) \in C([0,\infty ), L^2(\Omega))$, 
it follows that $u \in C^1([0,\infty ), H^{-1}(\Omega))$ from (\ref{eq:4.6}). 
Therefore, $u$ satisfies the equation (\ref{DSE}). It follows from the equation (\ref{DSE}) that $\Delta u \in C([0,\infty ), H^{-1}(\Omega))$. Hence, we deduce that $u \in C([0,\infty ) , H^1_0(\Omega ))$. This completes the proof.
\begin{remark}
Carles and Gallo \cite{CG11} also studied the case $\alpha =1$. They introduced weak solutions in this case as follows. A global weak solution to (\ref{DSE}) is a function $u \in C([0,\infty) , L^2(\Omega ))\cap L^{\infty} ( (0 ,\infty) ,H^1_0 (\Omega) )$ solving
\begin{align*}
i\frac{\del u}{\del t} +\Delta u+iF =0
\end{align*}
in the sense of distribution $\scD' (\R_{+} \times\Omega )$, where $F$ is such that
\begin{align*}
\| F\|_{L^{\infty} (\R_{+}\times \Omega )} \leq 1,~\text{and}~F=\frac{u}{|u|}~\text{if}~u\neq 0.
\end{align*}
They proved that (\ref{DSE}) has a unique, global weak solution in this sense above by compactness arguments. We do not know at this stage that our approach covers the case $\alpha =1$.
In the proof above, the estimate of $K_2$ is done in the same way, while we can not obtain the decay of $m$ and $n$ from the estimate of $K_1$ when $\alpha =1$.
\end{remark}
\begin{remark}
Combining with the proof of Theorem \ref{thm:2.1}, by the same approach we can construct a global strong solutions in $H^1_0(\Omega )$ to 
\begin{align}
\label{eq:4.7}
 i \del_{t}u + \Delta u +|u|^{2\sigma}u+i \frac{u}{|u|^{\alpha}}= 0, 
\end{align}
where $0<\alpha <1$ and $\sigma < 2/N~(N=1, 2)$. The assumption of nonlinearity means that no finite time blow-up occurs without damping term. Finite time extinction phenomena of (\ref{eq:4.7}) on compact manifolds was investigated by Carles and Gallo \cite{CG11} when $N=1$, and Carles and Ozawa \cite{CO15} when $N=2$. Their proof still holds in the Dirichlet boundary problem.  It is an interesting open problem whether finite time blow-up or extinction occurs when $0<\alpha <1$ and $\sigma =2/N$. We refer to \cite{OT09,D12} for these problems with the linear damping case ($\alpha=0$).
\end{remark}
When $\Omega$ is unbounded, we have the following result.
\begin{theorem}
\label{thm:4.3}
 Let $N\geq 1$, $0<\alpha <1$ and assume that $\Omega$ is an unbounded open set. For every $\varphi \in H^1_0 (\Omega )$, \textup{(\ref{DSE})} has a unique global solution $u \in (C_w \cap L^{\infty})([0,\infty ) , H^1_0 (\Omega))$ of the equation \textup{(\ref{DSE})} in the sense of distribution $\scD' (\R_{+}\times\Omega )$. Moreover, $u$ satisfies the following a priori estimate:
 \begin{align*}
 \| u\|_{L^{\infty}((0,\infty ) , H^1_0 )} \leq \| \varphi \|_{H^1_0}.
 \end{align*}
 \end{theorem}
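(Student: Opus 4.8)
The plan is to mimic the proof of Theorem \ref{thm:4.1} as closely as possible, replacing the global Cauchy estimate in $L^2(\Omega)$ (which used $|\Omega|<\infty$) by a localized Cauchy estimate, exactly as in the proof of Theorem \ref{thm:3.3}. First I would set up the same approximate problems \eqref{eq:4.2} with the truncated damping $g_m$, obtaining the uniform bounds \eqref{eq:4.3}, which do not require boundedness of $\Omega$. Then I would introduce the cut-off $\psi_R(x)=\psi(x/R)$ as in Section \ref{sec:3} and compute $\frac{d}{dt}\|\psi_R(u_m-u_n)\|_{L^2}^2$. The commutator term $2\im(\nabla(\psi_R^2)\cdot\nabla(u_m-u_n),u_m-u_n)$ is controlled by $C(M)/R$ using \eqref{eq:4.3}; the monotone part $-2\re(g_n(u_m)-g_n(u_n),\psi_R^2(u_m-u_n))$ is $\leq 0$ by Lemma \ref{lem:4.2} applied pointwise to the radial profile $s\mapsto g_n(s)$ (note $\psi_R^2\geq 0$, so multiplying the integrand by $\psi_R^2$ preserves the sign); and the term $-2\re(g_m(u_m)-g_n(u_m),\psi_R^2(u_m-u_n))$ is bounded by $C(m^{-(1-\alpha)}+n^{-(1-\alpha)})\|\psi_R^2(u_m-u_n)\|_{L^1}$, which by Cauchy--Schwarz on $\Omega_{2R}=\Omega\cap B(0,2R)$ is at most $C|\Omega_{2R}|^{1/2}\|\varphi\|_{L^2}(m^{-(1-\alpha)}+n^{-(1-\alpha)})$.

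Next I would integrate over $[0,T]$ (no Gronwall factor is needed here since the right-hand side is already bounded by constants independent of $t$) to get
\begin{align*}
\|\psi_R(u_m-u_n)\|_{C([0,T],L^2)}^2 \leq \frac{C(M)T}{R} + C|\Omega_{2R}|^{1/2}T\|\varphi\|_{L^2}\bigl(m^{-(1-\alpha)}+n^{-(1-\alpha)}\bigr).
\end{align*}
Taking $\limsup_{m,n\to\infty}$ kills the second term, leaving $\limsup_{m,n\to\infty}\|\psi_R(u_m-u_n)\|_{C([0,T],L^2)}^2\leq C(M)T/R$, and then for fixed $R_0$ and $R>R_0$ this gives $\limsup_{m,n\to\infty}\|u_m-u_n\|_{C([0,T],L^2(\Omega_{R_0}))}^2\leq C(M)T/R\to 0$ as $R\to\infty$. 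Hence $(u_m)$ is Cauchy in $C([0,T],L^2(\Omega'))$ for every $T>0$ and every $\Omega'\subset\subset\Omega$, so there is a limit $u$ with $u_m\to u$ in $C([0,T],L^2(\Omega'))$ and, by \eqref{eq:4.3} and Fatou/weak-* compactness, $u\in L^\infty([0,\infty),H^1_0(\Omega))$ with $\|u\|_{L^\infty((0,\infty),H^1_0)}\leq\|\varphi\|_{H^1_0}$ and $u_m(t)\wto u(t)$ in $H^1_0(\Omega)$ for each $t$, which yields $u\in C_w([0,\infty),H^1_0(\Omega))$.

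It remains to pass to the limit in the equation. Since convergence is only local in space, I would test against $\phi\in C^\infty_c(\R_+\times\Omega)$: the linear terms pass trivially, and for the damping term I would use the elementary inequality $|g(u_m)-g(u)|\lesssim|u_m-u|^{1-\alpha}$ together with local $L^2$ convergence and the support of $\phi$ to conclude $g_m(u_m)\to g(u)$ in $L^1_{\rm loc}$, so $u$ solves \eqref{DSE} in $\scD'(\R_+\times\Omega)$. For uniqueness I would take two solutions $u,v$ in this class, compute $\frac{d}{dt}\|\psi_R(u-v)\|_{L^2}^2$ the same way --- the commutator gives $C\|\nabla(u-v)\|_{L^2}\|u-v\|_{L^2}/R\leq C(\|\varphi\|_{H^1_0})/R$ and the monotone term is $\leq 0$ --- integrate, take $R\to\infty$, and conclude $u=v$; alternatively one can argue uniqueness holds because any two such solutions have the same local $L^2$ behaviour. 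The main obstacle I anticipate is purely the bookkeeping around the cut-off: ensuring the commutator term genuinely produces a factor $1/R$ (it does, since $|\nabla\psi_R^2|\lesssim R^{-1}$ is supported in the annulus $R\leq|x|\leq 2R$ where $|\nabla u_m|$ is bounded in $L^2$ uniformly), and verifying that the limit lies in $C_w$ rather than $C$ --- strong continuity into $H^1_0$ cannot be expected here because, unlike in the conservative case, there is no conserved energy to upgrade weak to strong convergence, which is exactly why the statement is phrased with $C_w\cap L^\infty$.
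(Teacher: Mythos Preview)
Your proposal is correct and follows essentially the same route as the paper: the paper states that existence is obtained by modifying the proof of Theorem~\ref{thm:4.1} with the cut-off $\psi_R$ as in Section~\ref{sec:3} (exactly the localized Cauchy estimate you outline), and then writes out only the uniqueness argument, which is the same cut-off computation you describe. The only point the paper makes slightly more explicit is that, for uniqueness, one first observes the equation holds in $H^{-1}(\Omega')$ for every $\Omega'\subset\subset\Omega$ and then takes the $H^{-1}(\Omega_{2R})$--$H^1_0(\Omega_{2R})$ duality to justify differentiating $\|\psi_R(u-v)\|_{L^2}^2$; otherwise your argument and the paper's coincide.
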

\begin{proof}
We can construct solutions if we modify the proof of Theorem \ref{thm:4.1} by using cut-off functions as in the proof of Theorem \ref{thm:3.3}. Here, we only prove uniqueness.

Assume that $u_1$ and $u_2$ are two solutions satisfying the equation (\ref{DSE}) in the sense of distribution $\scD' (\R_{+}\times\Omega )$. Note that $u_i$ ($i=1,2$) satisfies
\begin{align}
\label{eq:4.8}
i\del_t u_i +\Delta u_i +i\frac{u_i}{ |u_i|^{\alpha} } =0 \quad\text{in}~H^{-1} (\Omega' )
\end{align}
for all $\Omega' \subset\subset \Omega$. Let us fix a function $\psi \in C^{\infty}_c (\R^N)$ such that $\psi$ is radial and
\begin{align*}
\psi (x)=
\l\{
\begin{array}{ll}
1 & ~\text{if}~|x|\leq 1, \\[3pt]
0& ~\text{if}~|x|\geq 2,
\end{array}
\r.
\end{align*}
and set $\psi_R (x) =\psi (x/R)$. We set 
\begin{align*}
M= \max \l\{ \| u_1\|_{L^{\infty}((0,\infty ) , H^1_0 )} , \| u_2\|_{L^{\infty}((0,\infty ) , H^1_0  )}\r\} .
\end{align*}
By taking the $H^{-1}(\Omega_{2R} )-H^1_0(\Omega_{2R})$ duality and the equation (\ref{eq:4.8}), we have
\begin{align*}
 \frac{d}{dt}\| \psi_R (u_1-u_2)\|_{L^2}^2 &= 2\tbra[\del_t (\psi_R ( u_1-u_2) ) , \psi_R (u_1-u_2)]_{H^{-1}(\Omega_{2R}), H^1_0 (\Omega_{2R})}\\
 &=2\im \rbra[ \nabla (\psi_R^2 )\cdot\nabla (u_1-u_2),u_1-u_2]\\
&\quad -2\re \rbra[\psi_R^2 (g(u_1)-g(u_2) ), u_1-u_2] \\
&\leq \frac{C(M)}{R},
\end{align*}
where in the last inequality we have used Lemma \ref{lem:4.2}. Integrating over $[0,T]$ for any $T>0$, we obtain
\begin{align*}
\| \psi_R (u_1-u_2)\|_{C([0,T], L^2)} \leq \frac{C(M)T}{R}.
\end{align*}
Applying Fatou's lemma, we deduce that
\begin{align*}
\| u_1(t)-u_2(t)\|_{L^2} &\leq \liminf_{R\to\infty}\| \psi_R (u_1(t)-u_2(t) )\|_{L^2} \\
&\leq \liminf_{R\to\infty}\frac{C(M)T}{R}=0
\end{align*}
for all $t \in [0,T]$. This yields that $u_1=u_2$.
\end{proof} 
\section*{Acknowledgement}
The author would like to thank Tohru Ozawa for his advice and encouragement. This work was completed during the author's stay at Instituto de Matem\'{a}tica Pura e Aplicada (IMPA). The author is grateful to Felipe Linares for his hospitality. The author is also grateful to R\'{e}mi Carles for his numerous valuable comments to the first manuscript. This work was supported by Grant-in-Aid for JSPS Fellows 17J05828 and Top Global University Project, Waseda University.


\end{document}